\documentclass[11pt]{article}
\usepackage{graphicx}
\usepackage{pgfplots}

\usepackage[colorlinks=true,citecolor=blue]{hyperref}
\usepackage{natbib,extarrows}
\usepackage{graphicx}
\usepackage{amsfonts}
\usepackage{amsmath}
\usepackage{amssymb}
\usepackage{url}
\usepackage{fancyhdr}
\usepackage{indentfirst}
\usepackage{enumerate}
\usepackage{titlesec}
\usepackage{amsthm}
\usepackage{subcaption}
\usepackage{dsfont}
\usepackage[misc]{ifsym}
\usepackage{soul}

\theoremstyle{definition}
\newtheorem{example}{Example}

\newtheorem{definition}{Definition}

\theoremstyle{plain}
\newtheorem{theorem}{Theorem}
\newtheorem{lemma}{Lemma}
\newtheorem{proposition}{Proposition}

\theoremstyle{remark}
\newtheorem{remark}{Remark}
\allowdisplaybreaks

\usepackage{cases}

\theoremstyle{definition}
         
\def\oo{\infty}                   \def\d{\,\mathrm{d}}
                  \def\th{\theta}

\DeclareMathOperator{\sign}{sgn}

\def\N{\mathbb{N}}
\def\P{\mathbb{P}}
\def\p{\mathbb{P}}
\def\E{\mathbb{E}}

\def\R{\mathbb{R}}

\def\H{\mathcal{H}}

\def\d{\mathrm{d}}

\DeclareMathOperator*{\essinf}{ess\text{-}inf}

\def\oo{\infty}

\renewcommand{\(}{\left(}
\renewcommand{\)}{\right)}
\renewcommand{\[}{\left[}
\renewcommand{\]}{\right]}

\usepackage[onehalfspacing]{setspace}

\usepackage{bm}
\usepackage{tikz-qtree}
\usepackage{tikz}

\usepackage{todonotes}

\def\id{\mathds{1}}

\setlength{\bibsep}{1pt}

% change back when removing all comments
\topmargin -1.30cm \oddsidemargin -0.00cm \evensidemargin 0.0cm
\textwidth 16.56cm \textheight 23.20cm

\parindent 5ex

\begin{document}

\title{Stochastic dominance for linear combinations of infinite-mean risks}

\author{Yuyu Chen\thanks{Department of Economics, University of Melbourne,  Australia. \Letter~{\scriptsize\url{yuyu.chen@unimelb.edu.au}}}
\and Taizhong Hu\thanks{Department of Statistics and Finance, University of Science and Technology of China,  China. \Letter~{\scriptsize\url{thu@ustc.edu.cn}}}
\and Seva Shneer \thanks{Department of Actuarial Mathematics and Statistics, Heriot-Watt University,  UK. \Letter~{\scriptsize\url{V.Shneer@hw.ac.uk}}}
\and Zhenfeng Zou\thanks{Department of Statistics and Finance, University of Science and Technology of China,  China. \Letter~{\scriptsize\url{zfzou@ustc.edu.cn}}}
}
\date{\today}
\maketitle

\begin{abstract}
% Linear combinations of random variables have been extensively studied in the literature. 
In this paper, we establish a sufficient condition to compare linear combinations of independent and identically distributed (iid) infinite-mean random variables under usual stochastic order. 
We introduce a new class of distributions that includes many commonly used heavy-tailed distributions and show that within this class, a linear combination of random variables is stochastically larger when its weight vector is smaller in the sense of majorization order. We proceed to study the case where each random variable is a compound Poisson sum and demonstrate that if the stochastic dominance relation holds, 
 the summand of the compound Poisson sum belongs to our new class of distributions.
Additional discussions are presented for stable distributions. 

\textbf{Keywords}: infinite mean; heavy-tailed distributions; usual stochastic order; majorization order.

\textbf{MSC2020}: 60E05; 60E15. 

\end{abstract}

\section{Introduction}
          
Linear combinations of random variables play a crucial role in various fields, including statistics, operations research, reliability theory, and actuarial science. Over the past few decades, extensive studies and advances have been made in comparing linear combinations of random variables. 
 For example, see \cite{Pro65}, \cite{Ma98, Ma00}, \cite{ibragimov2005new, ibragimov2009portfolio}, \cite{XH11}, \cite{Yu11}, \cite{ZCN11},  \cite{PXH13}, \cite{Mao13}, and \cite{ZC20}. 

 Let $X_1, \dots, X_n$ be independent and identically distributed (iid) random variables. Given two non-negative real vectors $(\theta_1, \dots, \theta_n)$ and $(\eta_1, \dots, \eta_n)$ such that $(\theta_1, \dots, \theta_n)$ is smaller than $(\eta_1, \dots, \eta_n)$ in majorization order (see Definition \ref{maj}), a typical problem in this area is to compare $\sum_{i=1}^n \th_i X_i$ and $\sum_{i=1}^n \eta_i X_i$ via some notion of stochastic dominance. In the seminal work of \cite{Pro65}, it is shown that for iid symmetric log-concavely distributed $X_1, \dots, X_n$, $\sum_{i=1}^n \th_i X_i$ is more peaked than $\sum_{i=1}^n \eta_i X_i$, i.e., $\p(|\sum_{i=1}^n \th_i X_i|\le x)\ge \p(|\sum_{i=1}^n \eta_i X_i|\le x)$ for all $x>0$. This remarkable result was subsequently extended and found applications in economics, statistics, and operations research; see, e.g., \cite{ibragimov2005new} and the references therein. Notably, \cite{ibragimov2005new} showed that the peakedness property for log-concave distributions continues to hold for stable distributions with finite mean.  A closely related result is that for all iid finite-mean random variables, $\sum_{i=1}^n \th_i X_i$ is smaller than $\sum_{i=1}^n \eta_i X_i$ in convex order\footnote{A random variable $X$ is said to be smaller than a random variable $Y$ in the convex order, if $\E [\phi(X)]\le \E [\phi(Y)]$ for all convex functions $\phi:\R\to\R$, provided that both expectations exist.} (see, e.g., Theorem 3.A.35 of \cite{SS07} and Property 3.4.48 of \cite{DDGK05}). The above peakedness property and the relation in convex order both suggest that $\sum_{i=1}^n \th_i X_i$ is less spread-out than $\sum_{i=1}^n \eta_i X_i$.
 
A key assumption for the above results to hold is that random variables have a finite mean. If the mean of random variables is infinite, the observations for finite-mean models can be quite different or even flipped; see \cite{CW25} for a survey on infinite-mean models in the realms of finance and insurance.
For instance, if $X_1,\dots, X_n$ are iid stable random variables with tail parameter strictly less than 1 (thus infinite mean), \cite{ibragimov2005new} showed that $\sum_{i=1}^n \th_i X_i$ is less peaked than $\sum_{i=1}^n \eta_i X_i$. In particular, for such stable random variables that are always positive, this result can be equivalently written as 
\begin{equation}\label{eq:intro}
    \sum_{i=1}^n \eta_i X_i \leq_{\rm st} \sum_{i=1}^n \th_i X_i,
\end{equation}
where $\leq_{\rm st}$ denotes  usual stochastic order. For two random variables $X$ and $Y$, $X$ is said to be smaller than $Y$ in \emph{usual stochastic order} if $\p(X\le x)\ge \p(Y\le x)$ for all $x\in\R$; for properties of stochastic orders, we refer to \cite{MS02} and \cite{SS07}. \cite{CHWZ25} have recently shown that \eqref{eq:intro} also holds for Pareto distributions with infinite mean; a special case of \eqref{eq:intro} for two Pareto random variables with tail parameter 1/2 was previously studied by \cite{embrechts2002correlation}. 

Inequality \eqref{eq:intro} provides an intuitive implication in portfolio diversification: A more diversified portfolio is stochastically larger; see \cite{ibragimov2009portfolio}, \cite{IJW11}, and \cite{NEC06} for the diversification effects of infinite-mean models in different contexts. Another application of \eqref{eq:intro} can be found in the optimal bundling problems of \cite{IW10}.
 
 % However, an important assumption that the means of the random variables involved are finite is made in most articles. As is well know, for independent and identically distributed (iid) finite-mean random variables $X_1, \dots, X_n$, if a non-negative real vector $(\theta_1, \dots, \theta_n)$ is smaller than or equal to another non-negative real vector $(\eta_1, \dots, \eta_n)$ in the sense of majorization order, denoted by $\(\theta_1,\dots,\theta_n\)\preceq \(\eta_1,\dots,\eta_n\)$ (see Definition \ref{maj}), then the linear combination $\sum_{i=1}^n \th_i X_i$ is smaller than or equal to $\sum_{i=1}^n \eta_i X_i$ in convex order\footnote{A random variable $X$ is said to be smaller than a random variable $Y$ in the convex order, if $\E [\phi(X)]\le \E [\phi(Y)]$ for all convex functions $\phi:\R\to\R$, provided that both expectations exist.}, see Property 3.4.48 in \cite{DDGK05}. But in many important distributions, the mean of a random variable may be infinite, such as absolute Cauchy distribution. What's the relationship between linear combinations in this case?

 Given its potential applicability, this paper aims to explore sufficient and necessary conditions for \eqref{eq:intro} to hold.
In Section \ref{sec:2}, we introduce a new class of distributions, denoted by $\H$, and show that \eqref{eq:intro} holds for all distributions in $\H$ (Theorem \ref{thm:main}). Using the established majorization theory, we present a simple characterization of the continuous distributions in $\H$ (see Propositions \ref{prop:Hcont} (ii)). We apply this characterization to show that many commonly used heavy-tailed distributions belong to $\H$. Together with some closure properties of $\H$ (Propositions \ref{prop:Hproperty}-\ref {prop:insurance}),  we demonstrate that the new class is rather rich. We note here that the techniques used in \cite{ibragimov2005new} and \cite{CHWZ25} are designed specifically for stable distributions and Pareto distributions respectively, and there does not seem to be a way to generalize them to other distributions. We develop and apply different methods to prove our main result in Theorem \ref{thm:main}.

In Sections \ref{sec:3} and \ref{sec:4}, we proceed to study \eqref{eq:intro} for different classes of distributions. Section \ref{sec:3} assumes that each of $X_1,\dots,X_n$ follows a compound Poisson distribution, which is a standard model for portfolio claims and aggregate risks in banking and insurance sectors; see, e.g.,  \cite{kaas2008modern}. Assuming that each summand of the compound Poisson sum is a non-negative random variable, Theorem \ref{thm:CP} shows that \eqref{eq:intro} holds if and only if the distribution of each summand belongs to class $\H$. Section \ref{sec:4} presents further discussions of \eqref{eq:intro} for stable distributions, complementing the findings of \cite{ibragimov2005new}, who focused on the peakedness property of linear combinations of stable random variables.
Section \ref{sec:5} concludes the paper with some open questions. The appendices provide more details for Table \ref{t1}, which contains further examples of distributions from $\H$, as well as details of the simulation procedure for stable distributions used in the numerical illustration.
% In Section \ref{sec:3}, we proceed to characterize \eqref{SD-relation} under the realm of compound sum of random losses. In Theorem \ref{thm:CP}, we show that a compound Poisson distribution with Poisson paremeter $\lm$ and distribution $F$ satisfies \eqref{SD-relation} if and only if $F$ belongs to the new distribution class introduced in Seciton \ref{sec:2}. We provide a partial depiction of the stable distribitons in Theorem \ref{thm:stable} that make \eqref{SD-relation} hold in Section \ref{sec:4}.

% The structure of the paper is as follows. In Section \ref{sec:2}, we make some observations on \eqref{SD-relation} and present our main result within the new class of distributions introduced by the use of the Schur-concave funciton. In Section \ref{sec:3}, we give a sufficient and necessary condition for \eqref{SD-relation} to hold under compound Poisson distributions. The relationship between \eqref{SD-relation} and stable distributions is inverstigated in Section \ref{sec:4}. Section \ref{sec:5} concludes the paper. The appendices contains the detail examples in the new class of distributions and the simulation procedure of stable distributions.

\subsection*{Notation}
We fix some notation here. Denote by $\N$ the set of all positive integers, $\R_+$ the set of all non-negative real numbers, and $\R_{++}$ the set of all positive real numbers. Let $\N_0=\N\cup\{0\}$.
For $n\in\N$, let $[n]=\{1,\dots,n\}$. Throughout, $n$ is assumed to be larger than or equal to 2. 
We use $\overline \theta$ and $\overline \eta$ to denote vectors $(\theta_1,\dots,\theta_n)\in\R_+^n$ and $(\eta_1,\dots,\eta_n)\in\R_+^n$, respectively. 
 Denote by $ \varphi_X $ the characteristic function of a random variable $ X$. For a distribution $ F $, define its tail function $ \overline{F} = 1 - F $ and essential infimum $ \essinf F $.
We use $\simeq_{\rm st}$ for equality in distribution.

% The lemma below is from 3.C.1 of \cite{MOA11} and will be useful in our proofs.
% \begin{lemma}\label{lem:convex}
% If $g:\R_+\rightarrow \R_+$ is concave, then   $\phi(x_1,\dots,x_n)=\sum_{i=1}^n g(x_i)$ is Schur-concave in $(x_1,\dots,x_n)\in\R_+^n$.
% \end{lemma}

\section{Stochastic dominance and a class of distributions }\label{sec:2}
\subsection{A stochastic dominance relation}
We first recall the notion of majorization order.
\begin{definition}\label{maj}
For two vectors $ \overline{\theta} := \(\theta_1,\dots,\theta_n\)$ and $ \overline{\eta} :=  \(\eta_1,\dots,\eta_n\)$ in $\R^n$,  $ \overline{\theta}$ is dominated by $ \overline{\eta}$ in \emph{majorization order}, denoted by $ \overline{\theta} \preceq  \overline{\eta}$ if 
$$
\sum^n_{i=1}\theta_i =\sum^n_{i=1}\eta_i \mbox{~~~and~~~}
\sum^k_{i=1} \theta_{(i)} \ge \sum^k_{i=1} \eta_{(i)}\ \ \mbox{for}\ k\in [n-1],
$$
where $\theta_{(i)}$ and $\eta_{(i)}$ represent the $i$th smallest order statistics of $\overline{\theta}$ and $\overline{\eta}$, respectively.
\end{definition}

The main focus of the paper is on studying the following stochastic dominance relation

\begin{equation} \label{eq:SD}
\sum_{i=1}^n\eta_i X_i  \le_{\rm st} \sum_{i=1}^n\theta_i X_i\mbox{  for all $\overline \theta,\overline \eta\in\R^n_+$ such that $\overline \theta \preceq \overline \eta$,}\tag{SD}
\end{equation}
where $ X_1, \dots, X_n$ are iid random variables. Let $X$ be a copy of $ X_1, \dots, X_n$. If $X_1, \dots, X_n\sim F$ satisfy \eqref{eq:SD}, we also say $X$ or $F$ satisfies \eqref{eq:SD}. 
For some $x\in\R$, let
$$
g(\theta_1,\dots,\theta_n) = \p\(\sum_{i=1}^n \theta_i X_i > x\),~~\overline \theta\in\R^n_+.
$$ 
Then  \eqref{eq:SD} holds if and only if 
$g$ is Schur-concave on  $\mathbb{R}^n_+$ for all $x \in \R$; recall that a function $f$ on $\R^n$ is \emph{Schur-concave} if  $f(\overline x)\ge f(\overline y) $ for $\overline x \preceq  \overline y$.

We present below some elementary properties of \eqref{eq:SD}.

\begin{proposition}\label{prop:trivial}
The following statements hold.
\begin{enumerate}[{\rm (i)}]
\item If random variable $X$ satisfies \eqref{eq:SD}, then either $X$ is a constant, or $\E[|X|]$ is infinite.
\item If  two independent random variables $X$ and $Y$ satisfy \eqref{eq:SD}, then $X+Y$ satisfies \eqref{eq:SD}.
\item If  random variable $X$  satisfies \eqref{eq:SD}, then $(X+a)/b$ satisfies \eqref{eq:SD} for all $a\in\R$ and $b \in \R_{++}$.
\item If $Y_k$ with $k=1, 2, \dots$ satisfies \eqref{eq:SD} and $Y_k\xrightarrow{\d}X$, then $X$ satisfies \eqref{eq:SD}.
\end{enumerate}
\end{proposition}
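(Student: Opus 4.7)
My plan is to handle the four parts in sequence, with (i) requiring the real argument and (ii)--(iv) being standard manipulations of the usual stochastic order.

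For part (i), I would argue by contrapositive: assume $\E[|X|]<\oo$ and derive that $X$ is constant. The idea is to apply \eqref{eq:SD} to the extremal pair $\overline\eta=(1,0,\dots,0)$ and $\overline\theta=(1/n,\dots,1/n)$, which clearly satisfy $\overline\theta\preceq\overline\eta$. This yields
\begin{equation*}
X_1\le_{\rm st}\frac{1}{n}(X_1+\cdots+X_n).
\end{equation*}
Both sides have the same finite mean $\E[X]$. The key classical input I would invoke is that usual stochastic order together with equality of finite means forces equality in distribution — using $\E Y-\E X=\int_{\R}(F_X(x)-F_Y(x))\,\d x$ with nonnegative integrand, vanishing of the integral forces the integrand to vanish a.e., hence equal cdfs by right-continuity. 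Thus $X_1\simeq_{\rm st}\frac{1}{n}\sum_{i=1}^n X_i$ for every $n\ge 2$. Letting $n\to\oo$ and invoking the strong law of large numbers makes the right-hand side converge almost surely, and hence in distribution, to the constant $\E[X]$, which forces $X=\E[X]$ almost surely.

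For part (ii), I would work with iid copies $X_1,\dots,X_n$ of $X$ and iid copies $Y_1,\dots,Y_n$ of $Y$, with the two families mutually independent. Applying \eqref{eq:SD} to each family gives $\sum_i\eta_i X_i\le_{\rm st}\sum_i\theta_i X_i$ and $\sum_i\eta_i Y_i\le_{\rm st}\sum_i\theta_i Y_i$. The closure of $\le_{\rm st}$ under convolution of independent summands (Theorem 1.A.3(b) of \cite{SS07}) then yields $\sum_i\eta_i(X_i+Y_i)\le_{\rm st}\sum_i\theta_i(X_i+Y_i)$, which is exactly \eqref{eq:SD} for $X+Y$.

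Part (iii) is a short computation: for iid copies $X_1,\dots,X_n$ of $X$,
\begin{equation*}
\sum_{i=1}^n \eta_i\cdot\frac{X_i+a}{b}=\frac{1}{b}\sum_{i=1}^n\eta_i X_i+\frac{a}{b}\sum_{i=1}^n\eta_i,
\end{equation*}
and analogously for $\overline\theta$. Because $\overline\theta\preceq\overline\eta$ forces $\sum_i\theta_i=\sum_i\eta_i$, the additive constants on the two sides coincide; scaling the \eqref{eq:SD} inequality for $X$ by $b^{-1}>0$ and shifting by the common constant preserves $\le_{\rm st}$. For part (iv), I would fix $\overline\theta\preceq\overline\eta$. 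Independence is preserved under weak convergence (via characteristic functions), so with iid copies $(X^{(1)},\dots,X^{(n)})$ of $X$ and $(Y_k^{(1)},\dots,Y_k^{(n)})$ of $Y_k$ we have joint convergence $(Y_k^{(1)},\dots,Y_k^{(n)})\xrightarrow{\d}(X^{(1)},\dots,X^{(n)})$; the continuous mapping theorem then yields $\sum_i\theta_i Y_k^{(i)}\xrightarrow{\d}\sum_i\theta_i X^{(i)}$ and the analogous statement for $\eta$. The usual stochastic order is closed under weak limits: at common continuity points of the two limit cdfs the pointwise inequality survives, and right-continuity propagates it to every $x\in\R$, giving \eqref{eq:SD} for $X$. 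The only real obstacle is (i); the remaining parts are routine closure properties of $\le_{\rm st}$.
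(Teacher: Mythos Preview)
Your proposal is correct. Parts (ii)--(iv) coincide with the paper's proof, which simply invokes the standard closure of $\le_{\rm st}$ under convolution, location--scale transforms, and weak limits; your version just spells out the weak-convergence step for linear combinations via characteristic functions and the continuous mapping theorem, which the paper leaves implicit.

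For part (i) the two arguments diverge. The paper observes that \eqref{eq:SD} yields $\bigl(\sum_i\theta_i\bigr)X\le_{\rm st}\sum_i\theta_i X_i$ and then invokes Theorem~2.3(b) of \cite{muller2024some} to conclude $\E[|X|]=\oo$ for non-degenerate $X$. You instead give a fully self-contained argument: specialize to $\overline\eta=(1,0,\dots,0)$ and $\overline\theta=(1/n,\dots,1/n)$, use the elementary fact that $\le_{\rm st}$ together with equal finite means forces equality in distribution, and then pass to the limit via the strong law of large numbers. The advantage of your route is that it avoids the external citation and exposes the mechanism transparently; the paper's route is shorter once the cited result is accepted.
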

\begin{proof}
If $X$ is a constant, \eqref{eq:SD} clearly holds. For a non-degenerate random variable $X$, as
 \eqref{eq:SD} implies  $(\sum_{i=1}^n\theta_i) X  \le_{\rm st} \sum_{i=1}^n\theta_i X_i$, by Theorem 2.3 (b) of \cite{muller2024some}, $\E[|X|]=\infty$.
% Statement (i) was shown in Proposition 1 of \cite{CHWZ25}; it is also implied by Proposition 3.2 of  \cite{arab2024convex} or  Theorem 2.3 (b) of \cite{muller2024some}.
Statement (ii) is because usual stochastic order is preserved under convolution (see, e.g., Theorem 1.A.3 (b) of \cite{SS07}). Statement (iii) is trivial. Statement (iv)  holds as usual stochastic order is preserved under convergence in distribution (Theorem 1.2.14 of \cite{MS02}).
\end{proof}

\begin{remark}
Property \eqref{eq:SD} implies that 
$
(X_1+\dots+X_n)/n
$
is increasing in usual stochastic order as $n$ increases, since  $(1/n,\dots,1/n,0)$ dominates  $(1/(n+1),\dots,1/(n+1))$ in majorization order for all $n$. By Proposition \ref{prop:trivial}, if $X_1$ satisfies \eqref{eq:SD} and  $X_1$ is bounded from below, we have $\E[X_1]=\infty$. Then by the law of large numbers,  $(X_1+\dots+X_n)/n\rightarrow \infty$ almost surely as $n$ goes to infinity (see, e.g., Theorem 2.4.5 of \cite{D19}).
\end{remark}

\subsection{A new class of distributions}

Next, we introduce a class of distributions of non-negative random variables, denoted by $\H$. We will show later that this rather large class of distributions satisfies property \eqref{eq:SD} and thus, by Proposition \ref{prop:trivial}, the distributions in $\H$ have infinite mean.
\begin{definition}
Let $F$ be a distribution function with $\essinf F=0$. We say  $F\in \H$ if the function
\begin{equation*} 
\phi(x_1,x_2)= \overline{F} \left(\frac{1}{x_1}\right)+\overline{F} \left(\frac{1}{x_2}\right),~~(x_1,x_2)\in \R_{++}^2
\end{equation*}
is Schur-concave. For $X\sim F$ where $F\in\H$, we also write $X\in\H$.
\end{definition}

The following result characterizes distributions in class $\H$.
\begin{proposition}\label{prop:Hcont}
The following statements hold.
\begin{enumerate}[{\rm (i)}]
\item
A distribution function $F\in\H$ if and only if $\overline F(1/x)=c(x)+a(x)$ where $c:\R_{++} \rightarrow \R$ is concave and $a:\R_{++}\rightarrow \R$ is additive.
\item
Let $F$ be a continuous distribution on $\R_+$. Then $F\in\H$ if and only if $\overline F(1/x)$ is concave in $x\in\R_{++}$. In addition, if $F$ has density $f$, then $F\in\H$ if and only if $x^2f(x)$ is increasing in $x\in\R_{++}$.
\end{enumerate}
\end{proposition}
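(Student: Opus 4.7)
The plan is to reduce the bivariate Schur-concavity of $\phi$ on $\R_{++}^2$ to a one-dimensional functional inequality for $h(x):=\overline F(1/x)$, and then apply a classical decomposition result from the theory of functional equations.

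First I would observe that for the symmetric separable function $\phi(x_1,x_2)=h(x_1)+h(x_2)$, Schur-concavity on $\R_{++}^2$ is equivalent to
$$
h(a+t)+h(b-t)\ge h(a)+h(b) \qquad \text{for all } 0<a\le b \text{ and } t\in[0,(b-a)/2].
$$
This is immediate from the fact that any pair $(x_1,x_2)\preceq(y_1,y_2)$ (with $y_1\le y_2$) takes the form $(y_1+t,y_2-t)$ for some $t\in[0,(y_2-y_1)/2]$. A short manipulation (shifting $u=a$, $v=b-t$, $s=t$, and using symmetry in $t\leftrightarrow (b-a)-t$) shows that this condition is equivalent to the standard Wright-concavity condition $h(u+s)+h(v)\ge h(u)+h(v+s)$ for all $u\le v$ and $s\ge 0$.

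For part (i), I would then invoke the classical decomposition theorem for Wright-concave functions (due to C.T.~Ng, and reproduced in Kuczma's monograph on functional equations): a function on an open interval is Wright-concave iff it can be written as $h=c+a$ with $c$ concave and $a$ additive. The converse direction is direct: $c$ concave gives the inequality via the identities $a+t=\lambda a+(1-\lambda)b$ and $b-t=(1-\lambda)a+\lambda b$ with $\lambda=(b-a-t)/(b-a)$, while any additive $a$ satisfies the condition with equality.

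For part (ii), continuity of $F$ on $\R_+$ makes $h$ continuous on $\R_{++}$, which forces the additive part in the decomposition of (i) to be continuous (as $c$, being concave, is automatically continuous on the open interval); a continuous additive function is linear, $a(x)=\alpha x$, so $h=c+\alpha x$ is itself concave, and the converse is clear. For the density characterization, differentiating $h(x)=\overline F(1/x)$ via the chain rule yields $h'(x)=f(1/x)/x^2$ a.e.\ on $\R_{++}$, and concavity of $h$ is equivalent to $h'$ being nonincreasing. Substituting $y=1/x$ gives $h'(x)=y^2 f(y)$; since $y$ decreases as $x$ increases, $h'$ is nonincreasing in $x$ iff $y^2 f(y)$ is nondecreasing in $y$, i.e., $x^2 f(x)$ is nondecreasing in $x$. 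The main obstacle is the appeal to the Wright-concavity decomposition in part (i), which is a nontrivial result from the theory of Cauchy-type functional equations; everything else is a direct reduction or a routine calculation.
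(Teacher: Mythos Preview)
Your proposal is correct and takes essentially the same route as the paper: for (i) the paper simply cites Theorem~3 of Ng (1987), which is precisely the Wright-concavity decomposition you invoke after unpacking the reduction of Schur-concavity of $h(x_1)+h(x_2)$ to the decreasing-increments condition. For (ii) the paper cites the Marshall--Olkin--Arnold results (3.C.1 and 3.C.1.c) directly rather than deducing concavity from (i) via ``continuous additive $\Rightarrow$ linear,'' but the content is the same; your derivative computation for the density characterization is correct (and in fact the paper's own proof omits that part entirely).
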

\begin{proof}
\begin{enumerate}[(i)]
\item 
This follows from Theorem 3 of \cite{ng1987functions}.

\item
By 3.C.1 of \cite{MOA11}, if $g:\R_+\rightarrow \R_+$ is concave, then   $\phi(x_1,\dots,x_n)=\sum_{i=1}^n g(x_i)$ is Schur-concave in $(x_1,\dots,x_n)\in\R_+^n$.
Thus, we have the ``$\Leftarrow$" direction.  By  3.C.1.c of \cite{MOA11}, for a continuous function $g$ on an interval $I$, if    $\psi(x_1,x_2)=g(x_1)+g(x_2)$ is Schur-concave on $I^2$, then $g$ is concave. Then the ``$\Rightarrow$'' direction follows. \qedhere
\end{enumerate}
\end{proof}
\begin{remark}\label{rem:RV}
If distributions have densities of the form
$
f(x) = x^{-\alpha-1} l(x)
$
where function $l$ is increasing and $\alpha \in (0,1]$, the distributions are in $\H$ by Proposition \ref{prop:Hcont}. In addition, if $l$ is a slowly-varying function, by the Karamata representation (see Theorem B.1.6 of \cite{DF06}), it may be written as
$$
l(x) = \exp\left(c(x) + \int_0^x \frac{\varepsilon(t)}{t} \d t\right),
$$
where $c(x) \to c$ as $x \to \infty$ and $\varepsilon(x) \to 0$ as $x \to \infty$. If $c$ is increasing and $\varepsilon$ is non-negative, the distributions are in $\H$.
\end{remark}

One can use Proposition \ref{prop:Hcont} (ii) together with Remark \ref{rem:RV} to show that many commonly used heavy-tailed distributions are in $\H$. We present some examples below. Additional examples are collected in Table \ref{t1}; see Appendix \ref{sec:SHex} for more details of Table \ref{t1}. Some closure properties of $\H$ are collected in Section \ref{sec:closure}.

\begin{example}[Fr\'echet distribution]
 The Fr\'echet distribution function is $F(x)=\exp\(-x^{-\alpha}\)$, $x \in \R_{++}$. As its density function is $f(x)=\alpha x^{-\alpha-1}\exp\(-x^{-\alpha}\)$ for $x \in \R_{++}$, by Remark \ref{rem:RV}, $F\in\H$ if $\alpha\le 1$.
\end{example}
\begin{example}[Absolute Cauchy distribution]\label{ex:absCauchy}
The absolute Cauchy distribution is defined as
$F(x)=(2/\pi)\arctan(x)$ for  $x \in \R_{++}$. As
the density function is $f(x)=2/(\pi(x^2+1))=x^{-2}(2x^2/(\pi(x^2+1)))$ for $x \in \R_{++}$, by Remark \ref{rem:RV},  we have $F \in \H$. \end{example}
\begin{example}[Inverse Gamma distribution]\label{ex:IG}
The density function of the inverse Gamma distribution with parameters $\alpha,\beta \in \R_{++}$ is $f(x)= \beta^\alpha x^{-\alpha-1}\exp\(-\beta/x\)/\Gamma(\alpha)$, $x \in \R_{++}$. By Remark \ref{rem:RV}, if $\alpha\le 1$ and $\beta \in \R_{++}$, we have $F \in \H$. The inverse Gamma distribution with $\alpha=0.5$ is the L{\'e}vy distribution, which also belongs to the class of stable distributions.
\end{example}
% \begin{example}[L{\'e}vy distribution]
% The density function of the L{\'e}vy distribution is given by $f(x)= (2\pi)^{-1/2} x^{-3/2} \exp(-1/2x)$, $x>0$. By Remark \ref{rem:RV}, $F \in \H$.
% \end{example}
\begin{example}[Feller-Pareto distribution]
The Feller-Pareto distribution with parameters $\mu\in\R$ and $\sigma,\gamma,\gamma_1, \gamma_2 \in \R_{++}$, denoted by ${\rm FP}(\mu,\sigma,\gamma,\gamma_1, \gamma_2)$, has the density function
$$f(x)=\frac{1}{\gamma\sigma B(\gamma_1,\gamma_2)}\(\frac{x-\mu}{\sigma}\)^{(\gamma_2/\gamma)-1}\(1+\(\frac{x-\mu}{\sigma}\)^{1/\gamma}\)^{-\gamma_1-\gamma_2},~~x>\mu,$$
where
$B$ is the beta function. The class of  Feller-Pareto distributions is very general. If $\gamma_2=1$, the Feller-Pareto distribution becomes the Pareto type-IV distribution (${\rm FP}(\mu,\sigma, \gamma,\alpha,1)$), which includes the Pareto type-III distribution (${\rm FP}(\mu,\sigma,\gamma,1,1)$), the Pareto type-II distribution (${\rm FP}(\mu,\sigma,1,\alpha,1)$), and the Pareto type-I distribution (${\rm FP} (\sigma, \sigma,1,\alpha,1)$). Note that the Pareto type-IV distribution is essentially the Burr distribution.  See \cite{A15} for the statistical properties of the Feller-Pareto distribution. Assume that $\mu=0$ and $\sigma=1$ and let $h(x)=x^2f(x)$ for $x\in\R_+$. Taking derivative of $h$, we have 
$$h'(x)=\frac{x^{\gamma_2/\gamma}\(\(x^{1/\gamma}+1\)^{-\gamma_1-\gamma_2-1}\((\gamma-\gamma_1)x^{1/\gamma}+\gamma_2 +\gamma\)\)}{\gamma^2 B(\gamma_1,\gamma_2)}.$$
If $\gamma\ge \gamma_1$, i.e., the Feller-Pareto distribution has infinite mean, $h$ is increasing. By Proposition \ref{prop:Hcont} (ii), the Feller-Pareto distribution is in $\H$ if $\gamma\ge \gamma_1$.
\end{example}

\begin{table}[h!] 
{
\begin{center}
\caption{Further examples of  distributions in $\H$.}
\begin{tabular}{c |c | c}
\hline
& Distribution functions & Parameters  \\ 
% \hline
% Fr\'echet distribution & $F(x)=\exp(-x^{-\alpha}), ~~x>0$ & $\alpha\le 1$ \\  
\hline
Pareto distribution  & $F(x)=1-(x+1)^{-\alpha},~~ x \in \R_{++}$ & $\alpha\le 1$    \\
\hline
% Inverse Gamma distribution  & $F(x)=\int_0^x \beta^\alpha t^{-\alpha-1}\exp(-\beta/t)/\Gamma(\alpha)dt,~~ x>0$ & $\alpha\le 1$, $\beta>0$   \\
% \hline
% Absolute Cauchy distribution  & $F(x)=2\arctan(x)/\pi,~~ x>0$ & NA   \\
% \hline
% L{\'e}vy distribution  & $F(x)=\int_0^x (2\pi)^{-1/2} t^{-3/2} \exp(-1/2t)dt,~~ x>0$ & NA   \\
% \hline
Log-Pareto distribution & $F(x)=1-(\log(x+1)+1)^{-\alpha},~~x \in \R_{++}$ &$\alpha\le 2$\\
\hline
Inverse Burr distribution & $F(x)=(x^\tau/(x^\tau+1))^\alpha,~~ x \in \R_{++}$ & $\tau\le 1$, $\alpha>0$\\
\hline
Stoppa distribution & $F(x)=(1-(x+1)^{-\alpha})^\beta, ~~x \in \R_{++}$ &$\alpha\le 1$, $\beta\ge 1$\\
\hline
Log-Cauchy distribution & $F(x)=\arctan(\log(x))/\pi+1/2,~~x \in \R_{++}$ &NA\\
% \hline
% Generalized Pareto distribution  & $F(x)=1-\(1+\xi(x/\beta)\)^{-1/\xi},~~ x>0$ & $\xi\ge 1$    \\
% \hline
% Burr distribution  & $F(x)=1-\(x^\tau+1\)^{-\alpha}, ~~x>0$ & $\alpha,\tau\le 1$    \\
% \hline
%Inverse Burr distribution   & $F(x)=\(x^\tau/(x^\tau+1)\)^\alpha,~~ x>0$ & $\alpha>0$, $\tau\le 1$    \\
% \hline
%Log-Pareto distribution   & $F(x)=1-(\log(x+1)+1)^{-\alpha},~~x>0.$ & $\alpha\le 1$    \\
% \hline
%Stoppa distribution   & $F(x)=\(1-(x+1)^{-\alpha}\)^\beta, ~~x>0$ & $\alpha\le 1$, $\beta>0$    \\
\hline
\end{tabular}
\label{t1}
\end{center}
}
\end{table}

\subsection{Main result}

Let $\overline {\theta}$ and $\overline {\eta}$ be two vectors in $\R^n$.  Then $\overline {\theta}$ is a \emph{$T$-transform} of $\overline {\eta}$ if for some $\lambda \in [0,1]$ and $i,j\in [n]$ with $i \neq j$,
\begin{equation*}\label{eq:t-tran}
\theta_i=\lambda \eta_i + (1-\lambda)\eta_j,~\theta_j= (1-\lambda)\eta_i+\lambda \eta_j,
\end{equation*}
and $\theta_k=\eta_k$ for $k\in [n]\setminus \{i,j\}$. The following lemma will be useful in our proof.

\begin{lemma}
\label{lem:maj}
{\rm \citep[][Section 1.A.3]{MOA11}}\ \
For $\overline \theta, \overline \eta\in \R^n$, $\overline \th\preceq \overline \eta$ if and only if $\overline \theta$ can be obtained from $\overline\eta$ by a finite number of $T$-transforms.
\end{lemma}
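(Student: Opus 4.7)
The plan is to prove both directions separately, with the forward direction (each $T$-transform yields majorization) being a routine check and the reverse direction requiring an inductive construction.

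For the ``if'' direction, I would first verify that a single $T$-transform produces a majorized vector. If $\overline\theta$ is obtained from $\overline\eta$ by one $T$-transform at indices $i,j$ with parameter $\lambda \in [0,1]$, then clearly $\sum \theta_k = \sum \eta_k$ since $\lambda \eta_i + (1-\lambda)\eta_j + (1-\lambda)\eta_i + \lambda \eta_j = \eta_i + \eta_j$. For the partial-sum inequalities, one may either check them directly after sorting (the two modified coordinates are averages of $\eta_i,\eta_j$, hence lie between $\min(\eta_i,\eta_j)$ and $\max(\eta_i,\eta_j)$), or observe that the transform is an application of a doubly stochastic matrix, so $\overline\theta \preceq \overline\eta$ by the Hardy--Littlewood--Pólya theorem. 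Transitivity of $\preceq$ then handles any finite composition.

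For the ``only if'' direction, assume $\overline\theta \preceq \overline\eta$ and, without loss of generality, that both are written in decreasing order (the majorization relation depends only on the order statistics). If the vectors coincide, we are done with zero transforms. Otherwise, let $k$ be the smallest index with $\theta_k \neq \eta_k$. Since partial sums from the top satisfy $\sum_{i=1}^k \theta_i \le \sum_{i=1}^k \eta_i$ and the partial sums up to $k-1$ agree, we obtain $\theta_k < \eta_k$. Since the total sums agree, some later coordinate must compensate, so we can take $l > k$ to be the smallest index with $\theta_l > \eta_l$. I would then apply a $T$-transform to coordinates $k$ and $l$ of $\overline\eta$, shifting mass $\delta := \min(\eta_k - \theta_k,\, \theta_l - \eta_l) > 0$ from position $k$ to position $l$, which corresponds to $\lambda = 1 - \delta/(\eta_k - \eta_l) \in [0,1]$ (note $\eta_k \geq \eta_l$ by decreasing order of $\overline\eta$, and $\eta_k > \eta_l$ because $\eta_k > \theta_k \ge \theta_l > \eta_l$).

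Call the resulting vector $\overline\eta'$. The two key verifications are: (a) $\overline\theta \preceq \overline\eta'$, so the induction hypothesis applies; and (b) the number of indices at which $\overline\eta'$ and $\overline\theta$ disagree is strictly smaller than for $\overline\eta$ (by the choice of $\delta$, at least one of positions $k$ or $l$ now agrees). For (a), one checks that the top partial sums of $\overline\eta'$, after re-sorting, still dominate those of $\overline\theta$: only partial sums of indices strictly between $k$ and $l$ (in the original order) are altered, and each is decreased by exactly $\delta \le \eta_k - \theta_k$, which still leaves them above the corresponding partial sums of $\overline\theta$ by the minimality of $l$. The anticipated main obstacle is this partial-sum bookkeeping after the $T$-transform, particularly if $\overline\eta'$ ceases to be sorted in decreasing order, in which case the argument should be phrased directly in terms of sums of top-$m$ order statistics rather than initial segments. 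Induction on the number of disagreement positions then terminates the procedure in finitely many $T$-transforms.
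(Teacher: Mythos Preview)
The paper does not prove this lemma; it is quoted directly from \cite{MOA11}, Section 1.A.3, and used as a black box in the proof of Theorem~\ref{thm:main}. Your sketch is the classical Muirhead--Hardy argument that appears in that reference (and in most majorization texts), and it is correct as outlined, including your identification of the one genuine subtlety: after the $T$-transform the vector $\overline\eta'$ need not remain sorted, so the verification of $\overline\theta \preceq \overline\eta'$ must be done via sums of the top-$m$ order statistics rather than initial segments. With that caveat handled, the induction on the number of disagreement indices goes through.
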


\begin{theorem}\label{thm:main}
If $F\in \H$, then $F$ satisfies \eqref{eq:SD}.
\end{theorem}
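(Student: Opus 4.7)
The plan is to reduce the $n$-variable stochastic-dominance statement to a two-variable Schur-concavity question, and then tackle the latter via the Schur-Ostrowski differential criterion together with the characterization $F\in\H\Leftrightarrow x^2 f(x)$ is increasing from Proposition \ref{prop:Hcont}(ii). By Lemma \ref{lem:maj} it suffices to treat a single $T$-transform of $\overline\eta$, which modifies only two of its coordinates---say the first two. Conditioning on $X_3,\dots,X_n$ (independent of $X_1,X_2$, with coefficients coinciding in $\overline\theta$ and $\overline\eta$) then reduces the problem to showing that for iid $X_1,X_2\sim F\in\H$ and every $c>0$ (the case $c\leq 0$ being trivial since $X_i\geq 0$), the symmetric function $H(a,b):=\p(aX_1+bX_2>c)$ is Schur-concave on $\R_+^2$.

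Assume first that $F$ has a smooth density $f$ on $\R_{++}$. By the Schur-Ostrowski criterion, Schur-concavity of the symmetric $H$ is equivalent to $\partial_b H\geq \partial_a H$ for $a\geq b>0$. Starting from
$$
H(a,b)\;=\;\overline F(c/a)+\int_0^{c/a}\overline F\!\bigl((c-ax)/b\bigr)\,f(x)\,\d x,
$$
differentiation (with cancellation of boundary terms) yields
$$
\partial_b H-\partial_a H \;=\; \frac{1}{b^2}\int_0^{c/a} f(x)\,f\!\bigl((c-ax)/b\bigr)\,\bigl(c-(a+b)x\bigr)\,\d x.
$$
The integrand changes sign at $x_\star:=c/(a+b)$. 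The substitution $x\mapsto(c-ax)/b$ bijects $[x_\star,c/a]$ onto $[0,x_\star]$, and combining the two resulting pieces rewrites the integral as
$$
\int_0^{c/(a+b)} f(x)\,\bigl(c-(a+b)x\bigr)\Bigl\{f\!\bigl((c-ax)/b\bigr)-\tfrac{b^2}{a^2}\,f\!\bigl((c-bx)/a\bigr)\Bigr\}\,\d x.
$$
Setting $v:=(c-ax)/b$ and $u:=(c-bx)/a$, one readily checks $v\geq u>0$ and $au-bv=(a-b)x\geq 0$ on $[0,x_\star]$. By Proposition \ref{prop:Hcont}(ii), $v^2 f(v)\geq u^2 f(u)$, so combined with $au\geq bv$,
$
a^2 f(v)\,\geq\,(au/v)^2 f(u)\,\geq\,b^2 f(u),
$
making the braced expression non-negative on $[0,x_\star]$ and hence the integral non-negative.

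For general $F\in\H$ one approximates $F$ by smooth-density members $F_n\in\H$ converging weakly to $F$ and invokes Proposition \ref{prop:trivial}(iv). A natural route uses the Khintchine representation $1/X\simeq_{\rm st}UV$ with $U\sim\mathrm{Unif}(0,1)$ independent of $V\sim\mu$, available because $1/X$ has a decreasing density (equivalently, $\overline F(1/\cdot)$ is concave); mollifying $\mu$ to $\mu_n$ with smooth densities produces the desired $F_n$. The main obstacle will be carrying out this approximation cleanly---handling possibly infinite $f(0^+)$ and preserving $\essinf F_n=0$---but the analytic heart of the argument is the smooth-case computation above.
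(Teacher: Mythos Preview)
Your reduction to $n=2$ via $T$-transforms matches the paper's, and your smooth-density computation is correct: the partial-derivative formula, the sign change at $x_\star=c/(a+b)$, the substitution $x\mapsto(c-ax)/b$, and the final inequality $a^2 f(v)\ge b^2 f(u)$ via monotonicity of $x^2 f(x)$ and $au\ge bv$ all check out. So the analytic core is sound.

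The route, however, is genuinely different from the paper's. After reducing to $n=2$, the paper does \emph{not} pass to densities or invoke Schur--Ostrowski. Instead it writes $\p(aX_1+(1-a)X_2>x)$ as an integral of $\overline F$ and observes that it suffices to prove the \emph{pointwise} inequality
\[
\overline F\!\left(y+\tfrac{z}{1-\theta}\right)+\overline F\!\left(y+\tfrac{z}{\theta}\right)\ \ge\
\overline F\!\left(y+\tfrac{z}{1-\eta}\right)+\overline F\!\left(y+\tfrac{z}{\eta}\right)
\]
for $y,z\ge 0$ and $0<\eta\le\theta\le 1/2$. This follows directly from the concavity of $h(x)=\overline F(1/x)$ (the defining property of $\H$ for continuous $F$) combined with the concavity of $l(w)=1/(y+z/w)$; one simply notes $(\theta,1-\theta)\preceq(\eta,1-\eta)$ and chains the two concavities. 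The paper then handles the boundary case $\eta=0$ separately (which in your approach falls out by continuity of $H$, since $F\in\H$ is continuous). The upshot: the paper's argument applies to every continuous $F\in\H$ in one stroke, with no density assumption and no approximation, whereas your approach purchases an explicit and rather elegant integral identity at the price of needing a smooth density plus a limiting step. Your Khintchine-based approximation can indeed be made to work (since $1/X$ has a decreasing density, hence $1/X\simeq UV$, and mollifying the law of $V$ yields smooth $F_n\in\H$ with $F_n\Rightarrow F$), but the paper's route avoids that layer entirely.
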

\begin{proof}
We first show that it is sufficient to prove the theorem for the case of two random variables. 
By Lemma \ref{lem:maj}, it suffices to assume that $\overline \theta$ is obtained from $\overline \eta$ by a $T$-transform, i.e., only two components in $\overline \theta$ and $\overline \eta$ are different. Write $\overline \theta=(\lambda \eta_1 + (1-\lambda) \eta_2,(1-\lambda) \eta_1 + \lambda \eta_2, \eta_3,\dots,\eta_n)$ for some $\lambda\in[0,1]$. If the statement of the theorem is correct for two random variables, then
$$
\theta_1 X_1 + \theta_2 X_2=(\lambda \eta_1 + (1-\lambda) \eta_2) X_1 + ((1-\lambda) \eta_1 + \lambda \eta_2) X_2 \ge_{\rm st} \eta_1 X_1 + \eta_2 X_2. 
$$
Then, as $X_1,\dots,X_n$ are independent and  usual stochastic order is closed under convolution (see, e.g., Theorem 1.A.3 (b) of \cite{SS07}),
\begin{align*}
\sum_{i=1}^n \theta_i X_i&=(\lambda \eta_1 + (1-\lambda) \eta_2) X_1 + ((1-\lambda) \eta_1 + \lambda \eta_2) X_2 + \sum_{i=3}^n \eta_i X_i\\
& \ge_{\rm st} \eta_1 X_1 + \eta_2 X_2 + \sum_{i=3}^n \eta_i X_i=\sum_{i=1}^n \eta_i X_i.
\end{align*}

It therefore remains to prove the theorem in the case $n=2$. It is clear that we can restrict ourselves to the case $\theta_1+\theta_2=1$. It is also clear that, due to symmetries, it is sufficient to show that
\begin{equation} \label{eq:aux_1}
\p(\theta X_1 + (1-\theta) X_2 > x) \ge \p(\eta X_1 + (1-\eta) X_2 > x)
\end{equation}
holds for all $0 \le \eta \le \theta \le 1/2$ and for all $x\in\R$. To this end, for $a \in (0,1)$, write
\begin{align}\label{eq:remark}
 & \p(a X_1 + (1-a) X_2 > x) \nonumber\\
& = \p(X_1>x,X_2>x) + \p(X_1\le x, a X_1 + (1-a) X_2 > x) + \p(X_2\le x, a X_1 + (1-a) X_2 > x) \nonumber\\
& = \p(X_1>x,X_2>x) + \int_{y=0}^x  \overline{F}\left(\frac{x-a y}{1-a}\right)\d F(y) + \int_{y=0}^x  \overline{F}\left(\frac{x-(1-a) y}{a}\right)\d F(y).
\end{align}
From the above, it is now clear that, in order to show \eqref{eq:aux_1} holds for all $0 < \eta \le \theta \le 1/2$ and for all $x\in\R$, it is sufficient to show that
\begin{equation*} \label{eq:aux_2}
\overline{F}\left(\frac{x-\theta y}{1-\theta}\right) + \overline{F}\left(\frac{x-(1-\theta) y}{\theta}\right) \ge
\overline{F}\left(\frac{x-\eta y}{1-\eta}\right) + \overline{F}\left(\frac{x-(1-\eta) y}{\eta}\right)
\end{equation*}
for all $y \le x$ and for all $0 < \eta \le \theta \le 1/2$. This can be rewritten as
\begin{equation}\label{eq}
\overline{F}\left(y + \frac{x-y}{1-\theta}\right) + \overline{F}\left(y+\frac{x-y}{\theta}\right) \ge
\overline{F}\left(y+\frac{x-y}{1-\eta}\right) + \overline{F}\left(y+\frac{x-y}{\eta}\right)
\end{equation}
and, with conditions above,  \eqref{eq} can also be stated as
\begin{equation} \label{eq:suff_1}
\overline{F}\left(y + \frac{z}{1-\theta}\right) + \overline{F}\left(y+\frac{z}{\theta}\right) \ge
\overline{F}\left(y+\frac{z}{1-\eta}\right) + \overline{F}\left(y+\frac{z}{\eta}\right)
\end{equation}
for all $y, z \in \R_+$ and for all $0 < \eta \le \theta \le 1/2$. The above holds if $z=0$ and thus we only need to consider the case $ z\in \R_{++}$.
 Let $h(x)=\overline F(1/x)$ for $x\in\R_{++}$ and $l(w)=1/(y+z/w)$ for $w\in \R_{++}$, where $ y\in \R_{+}$ and $ z\in \R_{++}$.
Since $F\in\H$, by Proposition \ref{prop:Hcont} (ii), $h$ is concave. It is easy to check that $l$ is also concave. By Lemma \ref{lem:maj}, there exists some $\lambda\in[0,1]$ such that $\theta=(1-\lambda) (1-\eta) + \lambda \eta$ and  $1-\theta=\lambda (1-\eta) + (1-\lambda) \eta$. Therefore, we have 
\begin{align*}
\overline{F}\left(y + \frac{z}{1-\theta}\right) + \overline{F}\left(y+\frac{z}{\theta}\right) &= h(l(1-\theta))+h(l(\theta))\\
&=h(l(\lambda (1-\eta) + (1-\lambda) \eta))+h(l((1-\lambda) (1-\eta) + \lambda \eta))\\
&\ge h(\lambda l(1-\eta)+ (1-\lambda) l (\eta))+h((1-\lambda) l (1-\eta)+ \lambda l(\eta))\\
&\ge h(l(1-\eta))+h(l(\eta)) =
\overline{F}\left(y+\frac{z}{1-\eta}\right) + \overline{F}\left(y+\frac{z}{\eta}\right).
\end{align*}
Hence, \eqref{eq:aux_1} holds for $0 < \eta \le \theta \le 1/2$. Moreover, for any $a\in(0,1)$, we have 
\begin{align}\label{eq:proofSD*}
& \p(a X_1 + (1-a) X_2 > x)   \nonumber\\
% & = \p(X_1>x,X_2>x) + \p(X_1\le x, a X_1 + (1-a) X_2 > x) + \p(X_2\le x, a X_1 + (1-a) X_2 > x)  \\
& = \p(X_1>x,X_2>x) + \int_{y=0}^x  \overline{F}\left(\frac{x-a y}{1-a}\right)\d F(y) + \int_{y=0}^x  \overline{F}\left(\frac{x-(1-a) y}{a}\right)\d F(y)  \nonumber\\
& \ge \p(X_1 > x, X_2 > x) + \int_{y=0}^x  \overline{F}\left(\frac{x}{1-a}\right)\d F(y) + \int_{y=0}^x  \overline{F}\left(\frac{x}{a}\right)\d F(y) \nonumber\\
&\ge \p(X_1>x,X_2>x) + \int_{y=0}^x  \overline{F}\left(x\right)\d F(y)  \\
&=\p(X_1>x,X_2>x)+ \p(X_1>x,X_2\le x)= \p(X_1>x). \nonumber 
\end{align}
Inequality \eqref{eq:proofSD*} is due to $F\in\H$ and right-continuity of $\overline F$. Hence, \eqref{eq:aux_1} holds for $\eta=0$. 
The proof is complete.
\end{proof}

The proof of Theorem \ref{thm:main} is significantly 
%\seva{maybe significantly instead of quite?} 
different from \cite{CHWZ25} who established \eqref{eq:SD} for infinite-mean Pareto distributions. Their techniques depend on the assumption of Pareto distributions and have been used to derive a more general version of \eqref{eq:SD} for non-identically distributed Pareto random variables. On the other hand, our proof utilizes the symmetry property of functions and thus does not seem to work for non-identical distributions.

\begin{remark}\label{rem:unbounded}
We note that \eqref{eq:SD} can also hold for random variables that are not bounded from below. A classic example is the standard Cauchy distribution, with the distribution function given by 
$$F(x)=\frac{1}{\pi}\arctan(x)+\frac{1}{2},~~x\in\R.$$
It is well known that for iid standard Cauchy random variables $X_1,\dots,X_n$ and for any $\overline \theta\in\R_+^n$, $\sum_{i=1}^n\theta_iX_i\simeq_{\rm st} (\sum_{i=1}^n\theta_i)X_1$ and thus \eqref{eq:SD} holds. A question is whether the proof of  Theorem \ref{thm:main} can be generalized to incorporate distributions like the standard Cauchy distribution. This does not seem to be easy as \eqref{eq:suff_1} may flip for some negative $y$.
Indeed, from \eqref{eq:remark}, we can see that to let \eqref{eq:SD} hold for an arbitrary distribution $F$,  a sufficient and necessary condition is that
$$\int_{y=\essinf F}^x  \overline{F}\left(\frac{x-a y}{1-a}\right)\d F(y) + \int_{y=\essinf F}^x  \overline{F}\left(\frac{x-(1-a) y}{a}\right)\d F(y)$$
is increasing in $a\in[0,1/2]$ for all $x\in\R$. This condition can still hold if \eqref{eq:suff_1} does not hold for some values of $y$.
\end{remark}

A particularly interesting case of \eqref{eq:SD} relevant to risk sharing  is 
\begin{equation}\label{eq:SD*}
\(\sum_{i=1}^n \theta_i\) X_1 \le_{\rm st} \sum_{i=1}^n \theta_i X_i \mbox{~for all~} \overline\theta\in\R_+^n, \tag{SD*}
\end{equation}
where $X_1,\dots,X_n$ are iid random variables.  
Inequality \eqref{eq:SD*} implies that any rational decision makers will not share infinite-mean losses; we say a decision maker is rational if they prefer loss $X$ to loss $Y$ for $X\le_{\rm st}Y$.  \cite{CEW22} recently showed that \eqref{eq:SD*} holds if $X_1$ is an increasing and convex transformation of a Pareto random variable with tail parameter 1; such a random variable is called super-Pareto. Subsequently, this result was studied for more general classes of infinite-mean distributions by \cite{arab2024convex}, \cite{CS24}, and \cite{muller2024some}.

\begin{definition}
Let $F$ be a distribution function with $\essinf F=0$. We say  $F\in \H^*$ if
\begin{equation*} 
\overline{F} \left(\frac{1}{x_1}\right)+ \overline{F} \left(\frac{1}{x_2}\right)\ge  \overline{F} \left(\frac{1}{x_1+x_2}\right),~\mbox{for all}~(x_1,x_2)\in \R_{++}^2.\end{equation*}
\end{definition}
The distributions in $\H^*$ are called InvSub by \cite{arab2024convex} who also showed that $\H^*$ is more general than the class of super-Pareto distributions. Clearly, $\H\subseteq\H^*$. \cite{arab2024convex} has shown \eqref{eq:SD*} for the distributions in $\H^*$ (Theorem 3.1 of \cite{arab2024convex}).  This result is also implied by the last part of the proof of Theorem \ref{thm:main} (i.e., the inequality chain containing \eqref{eq:proofSD*}). 
% We can slightly modify the arguments in the proof of Theorem \ref{thm:main} to present  another proof. 

\begin{theorem}\label{thm:main2}
If $F\in \H^*$, then $F$ satisfies \eqref{eq:SD*}.
\end{theorem}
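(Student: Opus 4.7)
The plan is to extract the final segment of the proof of Theorem~\ref{thm:main}---the inequality chain containing \eqref{eq:proofSD*}---which already yields \eqref{eq:SD*} when $n=2$ using only the $\H^*$ inequality, and then bootstrap to arbitrary $n$ by a short induction.

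First, I would normalize. With $s = \sum_{i=1}^n \theta_i$ and $\alpha_i = \theta_i/s$, the change of variable $x \mapsto x/s$ in the definition of $\le_{\rm st}$ reduces \eqref{eq:SD*} to showing $\overline F(x) \le \p(\sum_{i=1}^n \alpha_i X_i > x)$ for every $x \in \R$ and every non-negative vector $(\alpha_1,\dots,\alpha_n)$ with $\sum_i \alpha_i = 1$. Hence I may assume $\sum_i \theta_i = 1$ throughout.

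Second, I would treat the base case $n=2$ by replaying \eqref{eq:proofSD*}. Decomposing $\p(\theta_1 X_1 + \theta_2 X_2 > x)$ into $\p(X_1>x,X_2>x)$ plus the two integrals over the events where exactly one of the $X_i$ is at most $x$, each integrand $\overline F((x-\theta_i y)/(1-\theta_i))$ is bounded from below by $\overline F(x/(1-\theta_i))$ via monotonicity of $\overline F$ and $y \ge 0$. The $\H^*$ condition applied at $(x_1,x_2)=(\theta_1/x,\theta_2/x)$ gives $\overline F(x/(1-\theta_1))+\overline F(x/\theta_1) \ge \overline F(x)$, which collapses the two integrals to $F(x)\overline F(x)$. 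Adding the $\{X_1>x,X_2>x\}$ contribution yields $\overline F(x)^2 + F(x)\overline F(x) = \overline F(x)$, establishing the $n=2$ case using only $\H^*$.

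Third, for $n \ge 3$, I would proceed by induction on $n$. Let $s_{n-1}=\theta_1+\cdots+\theta_{n-1}$ and let $X' \sim F$ be a fresh copy independent of $X_1,\dots,X_n$. The inductive hypothesis applied to $X_1,\dots,X_{n-1}$ with weights $(\theta_1,\dots,\theta_{n-1})$ gives $s_{n-1} X' \le_{\rm st} \sum_{i=1}^{n-1}\theta_i X_i$. Since usual stochastic order is preserved under convolution with an independent random variable (Theorem~1.A.3(b) of \cite{SS07}), adding $\theta_n X_n$ to both sides yields $s_{n-1} X' + \theta_n X_n \le_{\rm st} \sum_{i=1}^n \theta_i X_i$. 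Applying the $n=2$ case to the iid pair $(X',X_n)$ with weights $s_{n-1}$ and $\theta_n$ (which sum to $1$) produces $X_1 \le_{\rm st} s_{n-1} X' + \theta_n X_n$, closing the induction.

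The only substantive check, and hence the main obstacle, is verifying that the inequality chain containing \eqref{eq:proofSD*} truly uses the weaker $\H^*$ condition rather than the stronger concavity exploited elsewhere in the proof of Theorem~\ref{thm:main}. This reduces to the observation that the pointwise bound $\overline F(x/(1-a))+\overline F(x/a) \ge \overline F(x)$ invoked there is exactly the $\H^*$ inequality at $(x_1,x_2)=(a/x,(1-a)/x)$; once this is noted, everything else is either a reindexing or a direct application of the standard convolution property of usual stochastic order.
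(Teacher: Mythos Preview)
Your proposal is correct and follows the same approach the paper indicates: the paper does not write out a separate proof of Theorem~\ref{thm:main2} but states that it is implied by the inequality chain containing \eqref{eq:proofSD*}, and you have correctly identified that this chain uses only the $\H^*$ inequality (not the full concavity in $\H$) at the step $\overline F(x/(1-a))+\overline F(x/a)\ge \overline F(x)$. Your explicit induction to pass from $n=2$ to general $n$ is the natural completion that the paper leaves implicit, and it is sound.
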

% \begin{proof}
% It suffices to show the statement for two risks as in the proof of  Theorem \ref{thm:main}.  For $a \in [0,1]$ and $x\ge0$, we have

% The last inequality follows from $F\in\H^*$. The proof is complete.
% \end{proof}
The studies mentioned on \eqref{eq:SD*} are closely related to each other and we refer to \cite{arab2024convex}, \cite{CS24}, and \cite{muller2024some} for discussions. Variations of \eqref{eq:SD*} were studied by \cite{CEW22} and \cite{CS24} for random variables that are possibly negatively dependent or non-identically distributed.

\subsection{Some closure properties of $\H$}\label{sec:closure}

 This section presents some closure properties of $\H$, which can be used to construct more distributions satisfying \eqref{eq:SD}. 
For random variables $X$ and $Y$ with respective densities $f$ and $g$,  we say $X$ is smaller than $Y$ in \emph{likelihood ratio order}, denoted by $X\le_{\rm lr}Y$, if $g(x)/f(x)$ is increasing in $x$ over the union of the supports of $X$ and $Y$.  See, e.g.,  \cite{SS07} for properties of likelihood ratio order.

\begin{proposition}\label{prop:Hproperty}
Let $X\sim F$ where $F$ is continuous and $F\in \H$. Then 
\begin{enumerate}[{\rm (i)}]
\item 
$F^\beta\in \H$ where $\beta\ge 1$;
\item
$\phi(X)\in\H$ where $\phi$ is strictly increasing and convex with $\phi(0)=0$ and $1/\phi^{-1}(1/x)$ being concave in $x\in\R_+$;
\item 
if $Y\sim G$ where $G\in\H$ is continuous and $Y$ is independent of $X$, then $\max\{X,Y\}\in\H$;
\item 
if in addition $X$ and another  random variable $Y$ have densities and $X\le_{\rm lr}Y$, then $Y\in\H$.
\end{enumerate}
\end{proposition}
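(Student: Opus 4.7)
For all four parts the strategy is to reduce to the characterization in Proposition~\ref{prop:Hcont}(ii): for a continuous distribution on $\R_+$, membership in $\H$ is equivalent to concavity of $x\mapsto \overline F(1/x)$ on $\R_{++}$, or, when a density exists, to monotonicity of $x\mapsto x^2 f(x)$. Part (i) is then immediate: the density of $F^\beta$ is $\beta F(x)^{\beta-1}f(x)$, and so
$$x^2 \cdot \beta F(x)^{\beta-1} f(x) = \beta\, F(x)^{\beta-1}\cdot\bigl(x^2 f(x)\bigr)$$
is a product of two non-negative, non-decreasing functions (the first because $\beta\ge 1$ and $F$ is non-decreasing; the second by $F\in\H$), hence non-decreasing. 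Part (iv) is of the same flavour: writing $y^2 g(y) = \bigl(y^2 f(y)\bigr)\cdot (g(y)/f(y))$ on the support of $X$, the first factor is non-negative and non-decreasing by $F\in\H$, while $g/f$ is non-decreasing by $X\le_{\mathrm{lr}}Y$; the product is therefore non-decreasing.

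For part (ii), let $G$ be the distribution function of $\phi(X)$. Since $\phi$ is strictly increasing with $\phi(0)=0$, $\overline G(y)=\overline F(\phi^{-1}(y))$ for $y\ge 0$, and hence
$$\overline G(1/x) = \overline F\!\left(\frac{1}{g(x)}\right), \qquad g(x):=\frac{1}{\phi^{-1}(1/x)}.$$
By hypothesis $g$ is concave on $\R_{++}$, and by $F\in\H$ together with monotonicity of $\overline F$, the map $u\mapsto \overline F(1/u)$ is concave and non-decreasing. The composition of a non-decreasing concave function with a concave function is concave, so $x\mapsto\overline G(1/x)$ is concave, i.e. $\phi(X)\in\H$ by Proposition~\ref{prop:Hcont}(ii).

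For part (iii), the CDF of $\max\{X,Y\}$ is $F(x)G(x)$, so the claim reduces to showing that $1-F(1/x)G(1/x)$ is concave, i.e.\ that $F(1/x)G(1/x)$ is convex in $x\in\R_{++}$. Each of $F(1/x)$ and $G(1/x)$ is non-negative, non-increasing in $x$ (since $F,G$ are non-decreasing), and convex in $x$ (since $\overline F(1/x)$ and $\overline G(1/x)$ are concave by the assumption $F,G\in\H$). For two non-negative convex functions $u,v$ that are monotone in the same direction one has $(uv)''=u''v+2u'v'+uv''\ge 0$, because $u'v'\ge 0$; this applies here with both factors decreasing, so the product is convex. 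The essential infimum condition is inherited since $\essinf F=\essinf G=0$ forces $\essinf(F\cdot G)=0$. The main obstacle is precisely this part: the multiplicative structure of the max-CDF does not fit directly into the Schur-concavity framework, and one has to spot that the right object is the complement $F(1/x)G(1/x)$ and exploit that both factors are monotone in the same direction, which is what allows convexity to propagate through the product.
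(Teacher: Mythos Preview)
Your argument tracks the paper's proof closely for parts (ii), (iii), and (iv): the paper also writes $\overline G(1/x)=\overline F(\phi^{-1}(1/x))$ and uses the composition-of-concave rule for (ii); it also reduces (iii) to convexity of $F(1/x)G(1/x)$ as a product of non-negative, decreasing, convex factors (citing the standard fact rather than computing $(uv)''$); and it proves (iv) by exactly the factorization $y^2 g(y)=\bigl(y^2 f(y)\bigr)\cdot(g(y)/f(y))$.

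There is one genuine gap, in part (i). The hypothesis is only that $F$ is \emph{continuous}, not absolutely continuous, so you are not entitled to a density $f$; the density form of Proposition~\ref{prop:Hcont}(ii) does not apply. The paper avoids this by working directly with the survival function: with $G=F^\beta$ one has $\overline G(1/x)=1-\bigl(1-\overline F(1/x)\bigr)^\beta$, and since $t\mapsto 1-(1-t)^\beta$ is increasing and concave on $(0,1)$ for $\beta\ge 1$ while $x\mapsto \overline F(1/x)$ is concave by $F\in\H$, the composition is concave. Your density-based argument is a clean alternative, but it proves a slightly weaker statement (valid only when $F$ has a density); either note this restriction or switch to the concavity argument. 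A smaller point: in (iii) your use of $(uv)''=u''v+2u'v'+uv''$ tacitly assumes twice differentiability. The conclusion is still correct for general non-negative convex functions monotone in the same direction (this is the fact the paper invokes), but if you want to keep the derivative computation you should say a word about why it suffices here.
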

\begin{proof}
\begin{enumerate}[(i)]
\item 
Let $G=F^\beta$. Then $\overline G(1/x)=1-(1-\overline F(1/x))^\beta$, $x \in \R_{++}$. As $\beta\ge 1$, $1-(1-x)^\beta$ is increasing and concave in $x\in(0,1)$. Moreover,  since $F\in\H$, $\overline G(1/x)$ is concave and the desired result follows from Proposition \ref{prop:Hcont}.
\item
Denote by $G$ the distribution function of $\phi(X)$. Then for $x\in \R_{++}$,
$$\overline G\(1/x\)=\p\(\phi(X)\ge 1/x\)=\overline F\(\phi^{-1}\(1/x\)\).$$
As $1/\phi^{-1}(1/x)$ is concave,  $\overline G(1/\cdot)$ is also concave. By Proposition \ref{prop:Hcont}, we have the desired result.
\item
Denote the distribution function of $\max\{X,Y\}$ by $H$. By Proposition \ref{prop:Hcont}, it suffices to show that $\overline H(1/x)$ is concave in $x\in\R_{++}$. We have for $x\in \R_{++}$,
$$\overline H(1/x)=1-(1-\overline F(1/x))(1-\overline G(1/x)).$$
As both $1-\overline F(1/x)$ and $1-\overline G(1/x)$ are convex, positive, and decreasing in $x\in \R_+$, then $(1-\overline F(1/x))(1-\overline G(1/x))$ is convex in $x\in \R_+$ (see Exercise 3.32 of \cite{boyd2004convex}) and $\overline H(1/\cdot)$ is concave.
\item
Let $Y$ have density $g$.
We have 
$x^2g(x)=x^2f(x)(g(x)/f(x))$. As $x^2f(x)$ and $g(x)/f(x)$ are both increasing in $x$ and are positive, $x^2g(x)$ is increasing in $x$. By Proposition \ref{prop:Hcont}, we have the desired result. \qedhere
\end{enumerate}
% Thus, we complete the proof of this proposition.
\end{proof}
Proposition \ref{prop:Hproperty} (ii) states that some increasing and convex transforms on random variables preserve property $\H$. The conditions on $\phi$ in Proposition \ref{prop:Hproperty} (ii) can be satisfied by commonly used convex functions, e.g., $\phi(x)=x^p$ with $p\ge 1$ and $\phi(x)=e^x-1$. For random variables $X$ and $Y$, $Y$ is said to be more \emph{skewed} than $X$ if there exists some increasing and convex function $\phi$ such that $Y\simeq_{\rm st}\phi(X)$; intuitively $Y$ is more ``heavy-tailed" than $X$.  See \cite{van1964convex} for more details.
 \begin{proposition}\label{prop:mixture}
  For $w_1,\dots,w_n\in \R_+$ such that $\sum_{i=1}^n w_i=1$, if distribution functions $F_1,\dots,F_n\in\H$, then
  $\sum_{i=1}^n w_iF_i\in\H$.  
  \end{proposition}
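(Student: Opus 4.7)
The proof will be essentially immediate from the definition of $\H$, since the defining property is linear in $\overline{F}$. Here is the plan.

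First, I would observe that for the mixture $F = \sum_{i=1}^n w_i F_i$, the tail function is $\overline{F} = \sum_{i=1}^n w_i \overline{F}_i$. Consequently, the function whose Schur-concavity defines membership in $\H$ decomposes as
\begin{equation*}
\phi(x_1,x_2) = \overline{F}\!\left(\tfrac{1}{x_1}\right) + \overline{F}\!\left(\tfrac{1}{x_2}\right) = \sum_{i=1}^n w_i \left[\overline{F}_i\!\left(\tfrac{1}{x_1}\right) + \overline{F}_i\!\left(\tfrac{1}{x_2}\right)\right] = \sum_{i=1}^n w_i \, \phi_i(x_1,x_2),
\end{equation*}
where each $\phi_i$ is the analogous function for $F_i$ and is Schur-concave on $\R_{++}^2$ by the assumption $F_i \in \H$.

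Next, I would invoke the elementary fact that a non-negative linear combination of Schur-concave functions is Schur-concave (this is immediate from the definition: if $\overline x \preceq \overline y$, then $\phi_i(\overline x) \ge \phi_i(\overline y)$ for each $i$, so $\sum w_i \phi_i(\overline x) \ge \sum w_i \phi_i(\overline y)$). Since $w_i \ge 0$, this shows that $\phi$ itself is Schur-concave.

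Finally, I would verify the side condition $\essinf F = 0$ required by the definition. Each $F_i$ is supported on $\R_+$ with $\essinf F_i = 0$, so $F$ is supported on $\R_+$; and for every $\varepsilon > 0$, picking any $i$ with $w_i > 0$ gives $F(\varepsilon) \ge w_i F_i(\varepsilon) > 0$, hence $\essinf F = 0$. Combining this with the Schur-concavity of $\phi$ yields $F \in \H$. There is no real obstacle here; the proposition is really a statement that $\H$ is closed under convex combinations, which is transparent once one recognizes that the defining property is preserved termwise under averaging of tail functions.
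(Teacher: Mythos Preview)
Your proof is correct and follows exactly the same approach as the paper: write $\overline G = \sum_i w_i \overline{F_i}$, decompose $\phi$ as a non-negative combination of the Schur-concave functions $\phi_i$, and conclude. You even go slightly further than the paper by explicitly checking the side condition $\essinf F = 0$, which the paper leaves implicit.
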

  \begin{proof}
  Let $G=\sum_{i=1}^n w_iF_i$. Then for $x_1,x_2\in\R_{++}$,
  \begin{align*}
  \phi(x_1,x_2)&=\overline{G} \left(\frac{1}{x_1}\right)+\overline{G} \left(\frac{1}{x_2}\right)\\
  &=\sum_{i=1}^n w_i\overline{F_i} \left(\frac{1}{x_1}\right)+\sum_{i=1}^n w_i\overline{F_i} \left(\frac{1}{x_2}\right)=\sum_{i=1}^n w_i\(\overline{F_i} \left(\frac{1}{x_1}\right)+\overline{F_i} \left(\frac{1}{x_2}\right)\).
  \end{align*}
 Since $F_1,\dots,F_n\in\H$, $\phi$ is Schur-concave and thus $G\in\H$.
  \end{proof}
  \begin{proposition}\label{prop:insurance}
  If $X\in \H$, then $\max(X-c,0)\in\H$ where $c\in \R_{++}$.
  \end{proposition}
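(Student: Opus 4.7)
The plan is to verify the two conditions defining $\H$ for $F_Y$, the distribution of $Y = \max(X-c, 0)$. The essential infimum condition is immediate: since $\essinf F_X = 0 < c$, we have $\p(X \le c) > 0$, so $\p(Y = 0) > 0$ and $\essinf F_Y = 0$. For $x \in \R_{++}$, the tail is
$$
\overline{F_Y}(1/x) = \p(X > 1/x + c) = \overline{F_X}(1/x + c),
$$
so it suffices to show that $\phi(x_1, x_2) = \overline{F_Y}(1/x_1) + \overline{F_Y}(1/x_2)$ is Schur-concave on $\R_{++}^2$.

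I would reduce this to a concavity claim for a single-variable function and then use a composition argument. By 3.C.1 of \cite{MOA11}, it is enough to show that $x \mapsto \overline{F_X}(1/x+c)$ is concave on $\R_{++}$. Introduce $u(x) = x/(1+cx)$, so that $1/u(x) = 1/x + c$; a short computation of $u'$ and $u''$ shows that $u$ is strictly increasing and concave on $\R_{++}$. Letting $h(t) = \overline{F_X}(1/t)$, we may write $\overline{F_X}(1/x+c) = h(u(x))$. Since $F_X \in \H$, Proposition \ref{prop:Hcont}(ii) gives that $h$ is concave, and $h$ is increasing because both $\overline{F_X}$ and $t \mapsto 1/t$ are decreasing. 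The composition of a concave increasing function with a concave function is concave, so $h \circ u$ is concave, as required.

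The main point of care is the application of Proposition \ref{prop:Hcont}(ii): it is stated for continuous $F$ on $\R_+$, while $F_Y$ has an atom at $0$. However, the proposition is only invoked to obtain concavity of $h$ from $F_X \in \H$, and the passage from concavity of $\overline{F_Y}(1/x)$ back to Schur-concavity of $\phi$ uses only the easy sufficient direction, which makes no continuity requirement on $F_Y$. If $F_X$ itself is not continuous on $\R_+$, one can instead invoke Proposition \ref{prop:Hcont}(i) together with the observation that any measurable additive function on $\R_{++}$ is linear; this still forces $\overline{F_X}(1/y)$ to be concave, and the composition step closes the argument.
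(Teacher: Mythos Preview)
Your proof is correct and follows essentially the same route as the paper. The paper's proof simply writes $\overline{G}(1/x_i)=\overline{F}(c+1/x_i)$ and then says ``with a similar argument for \eqref{eq:suff_1}'' the Schur-concavity follows; that argument is precisely the composition $h\circ l$ with $h(t)=\overline{F}(1/t)$ and $l(w)=1/(c+1/w)=w/(1+cw)$, which is exactly your $u$. Your presentation is a bit more explicit---you show $h\circ u$ is concave and then invoke 3.C.1 of \cite{MOA11}---and your care with the continuity hypothesis in Proposition~\ref{prop:Hcont}(ii), together with the Cauchy-functional-equation patch via part~(i), is a welcome addition that the paper itself glosses over.
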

  \begin{proof}
  Denote by $F$ and $G$ the distribution functions of $X$ and $\max(X-c,0)$. For $x_1,x_2\in \R_{++}$, let 
 \begin{align*}
 \phi(x_1,x_2)&= \overline{G} \left(\frac{1}{x_1}\right)+\overline{G} \left(\frac{1}{x_2}\right)\\
 &=\overline{F} \left(c+\frac{1}{x_1}\right)+\overline{F} \left(c+\frac{1}{x_2}\right).
 \end{align*}
 With a similar argument for \eqref{eq:suff_1}, we can show that $\phi$ is Schur-concave and hence we have the desired result.
  \end{proof}
  Proposition \ref{prop:mixture} says that $\H$ is closed under distribution mixture. Proposition \ref{prop:insurance} is relevant in an insurance context: $X$ is the actual claim amount and $c$ is the deductible, i.e., $\max(X-c,0)$ is the payout by the insurer.

\section{Compound Poisson distributions}\label{sec:3}

In this section, we assume each random variable in \eqref{eq:SD} is a compound sum of random losses. 
%Denote the risks by $S_1,\dots,S_n$.
More precisely, for each  $i\in[n]$, let   $X_i=\sum_{j=1}^{N_i}Y_{i,j}$, where $\{Y_{i,j}\}_{j\in\N}$ are iid random losses and $N_i$ is the number of losses,  which is independent of $\{Y_{i,j}\}_{j\in\N}$. The distribution of $X_i$ is referred to as \emph{compound distribution}. 
One classic example of compound distributions is the compound Poisson distribution. 
\begin{definition}
Let random variables $Y_1,Y_2,\dots$ be iid with distribution $F$ and $N$ follow a Poisson distribution with mean $\lambda\in \R_{++}$, i.e., $\p(N=j)=\lambda^je^{-\lambda}/j!$, $j\in\N_0$. Then we say that $X=\sum_{i=1}^NY_i$ follows a \emph{compound Poisson distribution} with Poisson parameter $\lambda$ and distribution $F$, denoted by ${\rm CP}(\lambda,F)$. 
\end{definition}

We first note that if a distribution $F$ satisfies \eqref{eq:SD}, \eqref{eq:SD} may not hold for ${\rm CP}(\lambda,F)$. For instance, if $F$ is a degenerate distribution at 1, then ${\rm CP}(\lambda,F)$ becomes a Poisson distribution, which clearly does not satisfy \eqref{eq:SD} by Proposition \ref{prop:trivial}. We give below a sufficient and necessary condition for \eqref{eq:SD} to hold for ${\rm CP}(\lambda,F)$.

\begin{theorem}\label{thm:CP}
Let $\lambda\in \R_{++}$ and $F$ be a distribution function with $\essinf F=0$. Then ${\rm CP}(\lambda,F)$ satisfies  \eqref{eq:SD} if and only if $F\in\H$.
\end{theorem}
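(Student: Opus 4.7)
The strategy is to express $\sum_{i=1}^n\theta_i X_i$ as a single compound Poisson. By superposition of independent Poisson processes and scaling each jump by $\theta_i$,
\begin{equation*}
\sum_{i=1}^n\theta_i X_i\ \simeq_{\rm st}\ {\rm CP}(n\lambda, G_{\overline\theta}),\qquad \overline G_{\overline\theta}(y)=\frac{1}{n}\sum_{i=1}^n \overline F(y/\theta_i),
\end{equation*}
so that \eqref{eq:SD} for ${\rm CP}(\lambda,F)$ becomes a stochastic comparison of two compound Poissons with the same rate $n\lambda$ and different mixture jump distributions, both built from scaled copies of $F$.

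For the sufficient direction, assume $F\in\H$. By Proposition \ref{prop:Hcont}, the map $(x_1,x_2)\mapsto \overline F(1/x_1)+\overline F(1/x_2)$ is Schur-concave on $\R_{++}^2$; the standard pairwise reduction via $T$-transforms (Lemma \ref{lem:maj}) extends Schur-concavity to the separable sum $\sum_i\overline F(1/x_i)$ on $\R_{++}^n$. Since majorization is scale-invariant, $\overline\theta\preceq\overline\eta$ gives $\sum_i\overline F(t/\theta_i)\ge\sum_i\overline F(t/\eta_i)$ for every $t>0$, i.e.\ $G_{\overline\theta}\ge_{\rm st}G_{\overline\eta}$. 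A monotone coupling of the iid jumps (shared ${\rm Poisson}(n\lambda)$ counter and Skorokhod-coupled jump pairs) then yields ${\rm CP}(n\lambda,G_{\overline\theta})\ge_{\rm st}{\rm CP}(n\lambda,G_{\overline\eta})$, which is exactly \eqref{eq:SD}.

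For the necessary direction, assume ${\rm CP}(\lambda,F)$ satisfies \eqref{eq:SD}; by Proposition \ref{prop:trivial}(i) $F$ must have infinite mean. Fix $\overline\theta\preceq\overline\eta$ in $\R_+^2$. I would rely on the heavy-tail asymptotic
\begin{equation*}
\p({\rm CP}(2\lambda,G_{\overline\theta})>t)\ \sim\ 2\lambda\,\overline G_{\overline\theta}(t)\ =\ \lambda\bigl(\overline F(t/\theta_1)+\overline F(t/\theta_2)\bigr)\qquad\text{as }t\to\infty
\end{equation*}
(the standard tail equivalence for a compound Poisson with a subexponential jump distribution, governing the heavy-tailed infinite-mean regime at hand) to convert the (SD) inequality into $\overline F(t/\theta_1)+\overline F(t/\theta_2)\ge\overline F(t/\eta_1)+\overline F(t/\eta_2)$ for all sufficiently large $t$. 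Rescaling the weights jointly with $t$ via $(\overline\theta,\overline\eta,t)\mapsto(c\overline\theta,c\overline\eta,ct)$, which preserves majorization and the scale-invariant asymptotic, and setting $x_i=\theta_i/t$, I obtain Schur-concavity of $(x_1,x_2)\mapsto\overline F(1/x_1)+\overline F(1/x_2)$ on $\R_{++}^2$, i.e.\ $F\in\H$.

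The main obstacle is the necessary direction: rigorously justifying $\p({\rm CP}(\mu,G)>t)\sim\mu\overline G(t)$ from just the \eqref{eq:SD} and infinite-mean hypotheses, and controlling the higher-order convolution contributions $G_{\overline\theta}^{*m}$, $m\ge 2$, in the expansion $\p({\rm CP}>t)=\sum_{m\ge 1}((2\lambda)^m e^{-2\lambda}/m!)\,G_{\overline\theta}^{*m}(t,\infty)$ so that the Schur-concavity structure encoded at $m=1$ is not obscured. The specific mixture form of $G_{\overline\theta}$ together with the closure properties of $\H$ from Section \ref{sec:closure} are the natural tools to carry out this control.
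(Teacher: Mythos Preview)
Your sufficient direction is correct and is essentially the paper's argument: both of you rewrite $\sum_i\theta_iX_i$ as ${\rm CP}(n\lambda,G_{\overline\theta})$ with $\overline G_{\overline\theta}(y)=\frac1n\sum_i\overline F(y/\theta_i)$, use $F\in\H$ to get $G_{\overline\eta}\le_{\rm st}G_{\overline\theta}$, and then lift this to the compound sums by coupling/convolution.

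Your necessary direction, however, has a genuine gap. First, the tail equivalence $\p({\rm CP}(\mu,G)>t)\sim\mu\,\overline G(t)$ requires subexponentiality of $G$, which you have not established; infinite mean (the only thing Proposition~\ref{prop:trivial}(i) gives you) does not imply subexponentiality. Second---and this is the fatal point---even granting the asymptotic, it only controls \emph{ratios}, so from $\p({\rm CP}(2\lambda,G_{\overline\theta})>t)\ge\p({\rm CP}(2\lambda,G_{\overline\eta})>t)$ you can conclude at best $\liminf_{t\to\infty}\overline G_{\overline\theta}(t)/\overline G_{\overline\eta}(t)\ge 1$, not a pointwise inequality $\overline G_{\overline\theta}(t)\ge\overline G_{\overline\eta}(t)$. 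Your rescaling $(\overline\theta,\overline\eta,t)\mapsto(c\overline\theta,c\overline\eta,ct)$ does not repair this: since ${\rm CP}(2\lambda,G_{c\overline\theta})\simeq_{\rm st}c\cdot{\rm CP}(2\lambda,G_{\overline\theta})$, the inequality at level $ct$ with weights $c\overline\theta$ is \emph{identical} to the inequality at level $t$ with weights $\overline\theta$, so no limit is being taken and the asymptotic never kicks in.

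The paper isolates the $m=1$ term by varying $\lambda$ rather than $t$: writing
\[
\p\!\left(\sum_i\theta_iX_i>x\right)-\p\!\left(\sum_i\eta_iX_i>x\right)=\lambda e^{-\lambda}\bigl[\overline G_{\overline\theta}(x)-\overline G_{\overline\eta}(x)\bigr]+\sum_{m\ge 2}\p(N=m)\,[\cdots],
\]
the tail sum is bounded in absolute value by $\p(N\ge 2)\le\lambda^2$. If $F\notin\H$, there exist $\overline\theta\preceq\overline\eta$ and $x$ with $\overline G_{\overline\theta}(x)-\overline G_{\overline\eta}(x)=-\varepsilon<0$, so the right-hand side is at most $-\tfrac12\varepsilon\lambda(1-\lambda)+\lambda^2<0$ for $\lambda$ small, contradicting \eqref{eq:SD}. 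This is an exact, non-asymptotic argument in $\lambda$; your large-$t$ asymptotic cannot substitute for it.
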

\begin{proof}
We first show the ``$\Longleftarrow$" direction. Using an argument similar to the proof of Theorem \ref{thm:main}, it is sufficient to establish the result for the case of two random variables.  Let $X_1,X_2\sim {\rm CP}(\lambda,F)$ be independent and $\overline a=(a_1,a_2)\in(0,1)^2$ such that $a_1+a_2=1$. It is well known that  $\sum_{i=1}^2a_iX_i\sim {\rm CP}(2\lambda,H_{\overline a})$ where
$$H_{\overline a}(x)=\sum_{i=1}^2\frac{1}{2}F(x/a_i).$$
We give a proof for this result here for completeness. The characteristic function of $\sum_{i=1}^2a_iX_i$  is 
\begin{align*}
\varphi_{\sum_{i=1}^2a_i X_i}(t) =\exp\left\{\lambda\(\sum_{i=1}^2\varphi_{X_1}(a_it)-2\)\right\} =\exp\left\{2\lambda\(\frac{1}{2}\sum_{i=1}^2\varphi_{X_1}(a_it)-1\)\right\}.
\end{align*}
As $\sum_{i=1}^2\varphi_{X_1}(a_it)/2$ is the characteristic function of $H_{\overline a}$, $\sum_{i=1}^2a_iX_i\sim {\rm CP}(2\lambda,H_{\overline a})$.

% Write $\overline \theta=(\theta_1\dots,\theta_n)$ and $\overline \eta=(\eta_1\dots,\eta_n)$ (this has been defined). 
Let $\overline \theta\preceq \overline \eta$ such that $\overline \theta, \overline \eta\in (0,1)^2$ and $\sum_{i=1}^2\theta_i = \sum_{i=1}^2\eta_i = 1$.  We need to show 
$$\sum_{i=1}^2\eta_iX_i\le_{\rm st}\sum_{i=1}^2\theta_iX_i.$$
Since $\sum_{i=1}^2\theta_iX_i\sim {\rm CP}(2\lambda,H_{\overline \theta})$ and $\sum_{i=1}^2\eta_iX_i\sim {\rm CP}(2\lambda,H_{\overline \eta})$,  it suffices to show 
$$\sum_{i=1}^NV_i\le_{\rm st}\sum_{i=1}^NU_i,$$ 
where
$N\sim {\rm Poi}(2\lambda)$,  $V_1,V_2,\dots\sim  H_{\overline \eta}$ and $U_1,U_2,\dots\sim  H_{\overline \theta}$ are mutually independent.  As $F\in\H$, we have $H_{\overline \theta}(x)\le H_{\overline \eta}(x)$ for all $x\in \R_+$, i.e., $H_{\overline \eta}\le_{\rm st}H_{\overline \theta}$. Moreover, As usual stochastic order is preserved under convolution,   for $x\in \R_+$, 
\begin{eqnarray*}
\p\(\sum_{i=1}^NV_i\le x\) &=& \sum_{m=1}^\infty \p\(\sum_{i=1}^mV_i\le x\)\p(N=m) \\ 
&\ge& \sum_{m=1}^\infty \p\(\sum_{i=1}^mU_i\le x\)\p(N=m)=\p\(\sum_{i=1}^NU_i\le x\).
\end{eqnarray*}
Hence, $\sum_{i=1}^2\eta_iX_i\le_{\rm st}\sum_{i=1}^2\theta_iX_i$ holds for $\overline \theta, \overline \eta\in (0,1)^2$. Moreover, if $\overline \eta=(0,1)$, $\sum_{i=1}^2\eta_iX_i\sim {\rm CP}(2\lambda,H^*_{\overline \eta})$ where 
$$H^*_{\overline \eta}(x)=\frac{1}{2}\(F(x)+\id_{\{x\ge 0\}}\).$$
Clearly, we have $H^*_{\overline \eta}\le_{\rm st}H_{\overline \theta}$ by the definition of $F$. The  ``$\Longleftarrow$" direction is done. 

Next, we show the ``$\Longrightarrow$" direction. 
We have that, if $(\theta_1,\theta_2) \preceq (\eta_1,\eta_2)$ where $(\eta_1,\eta_2)\in(0,1)^2$,
 then for all $x\in \R$ and for some $\lambda\in\R_{++}$,
\begin{equation} \label{eq:aux_5}
\p(\theta_1 X_1 + \theta_2 X_2 > x) \ge \p(\eta_1 X_1 + \eta_2 X_2 > x),
\end{equation}
where $X_1,X_2\sim {\rm CP}(\lambda/2,F)$ are independent. Thus, by the previous arguments, \eqref{eq:aux_5} may be rewritten as
\begin{equation} \label{eq:aux_6}
 0 \le \lambda e^{-\lambda} [\p(U_1 > x) - \p(V_1 > x)] + \sum_{k = 2}^{\oo} \p(N=k) \[\p\left(\sum_{i=1}^k U_i > x\right) - \p\left(\sum_{i=1}^k V_i > x\right)\],
\end{equation}
where  $N\sim {\rm Poi}(\lambda)$, $V_1,V_2,\dots\sim  H_{\overline \eta}$ and $U_1,U_2,\dots\sim  H_{\overline \theta}$ are mutually independent.
Assume now that $F$ is not in $\H$, i.e., there exist vectors $(\theta_1,\theta_2) \preceq (\eta_1,\eta_2)$ and a value $x$ such that
$$
\[\overline{F}\left(\frac{x}{\theta_1}\right) + \overline{F}\left(\frac{x}{\theta_2}\right)\] - \[\overline{F}\left(\frac{x}{\eta_1}\right) + \overline{F}\left(\frac{x}{\eta_2}\right)\] = - \varepsilon,
$$
where $\varepsilon \in \R_{++}$. Let us now turn our attention back to \eqref{eq:aux_6}. The second term on the RHS may be bounded by
$$
\left|\sum_{k=2}^\oo \p(N=k) \[\p\left(\sum_{i=1}^k U_i > x\right) - \p\left(\sum_{i=1}^k V_i > x\right)\]\right| \le \p(N\ge 2) \le \lambda^2,
$$
while the first term is
$$
\lambda e^{-\lambda} (\p(U_1 > x) - \p(V_1 > x))  = - \frac{1}{2} \varepsilon \lambda e^{-\lambda} \le -\frac{1}{2} \varepsilon \lambda(1-\lambda).
$$
We have from \eqref{eq:aux_6} that, for all $\lambda\in \R_{++}$,
$$
0 \le -\frac12\varepsilon \lambda(1-\lambda) +  \lambda^2,
$$
which is clearly false for sufficiently small values of $\lambda$. Therefore, we have a contradiction and the proof is complete.
\end{proof}

\begin{remark}
By Theorem \ref{thm:CP}, if $F\in \H$,  CP$(\lambda, F)$ satisfies \eqref{eq:SD} for any $\lambda\in \R_{++}$. As usual stochastic order is preserved under mixtures (see, e.g., Theorem 1.A.3 (d) of \cite{SS07}), if $F\in\H$, \eqref{eq:SD} holds for the compound mixed Poisson distribution CP$(\Lambda,F)$ where $\Lambda$ is a random variable.
\end{remark}

In the following, we give a sufficient and necessary condition for \eqref{eq:SD*} to hold for ${\rm CP}(\lambda,F)$.  As this result can be shown by a slight modification to the proof of Theorem \ref{thm:CP},   its proof is omitted.
\begin{theorem}
Let $\lambda\in \R_{++}$ and $F$ be a distribution function with $\essinf F=0$. Then ${\rm CP}(\lambda,F)$ satisfies  \eqref{eq:SD*} if and only if $F\in\H^*$.
\end{theorem}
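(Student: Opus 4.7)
The plan is to closely follow the proof of Theorem \ref{thm:CP}, specialized to the (weaker) comparison of $(\theta_1,\dots,\theta_n)$ with $(c,0,\dots,0)$, where $c=\sum_{i=1}^n\theta_i$.

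For the ``$\Longleftarrow$'' direction, I first reduce to $n=2$ by induction on $n$: assuming the result for $n-1$, applying the two-variable case with weights $\sum_{i=1}^{n-1}\theta_i$ and $\theta_n$ together with convolution closure of $\le_{\rm st}$ gives the general case. For $n=2$, writing $c=\theta_1+\theta_2$, I would mimic the characteristic-function computation used in the proof of Theorem \ref{thm:CP}. Using
$$\varphi_{cX_1}(t)=\exp\!\left(\lambda(\varphi_{X_1}(ct)-1)\right)=\exp\!\left(2\lambda\!\left(\tfrac12(\varphi_{X_1}(ct)+1)-1\right)\right),$$
one identifies $cX_1\sim\mathrm{CP}(2\lambda,K)$ with $K(x)=\tfrac12\!\left(F(x/c)+\id_{\{x\ge 0\}}\right)$, while $\theta_1X_1+\theta_2X_2\sim\mathrm{CP}(2\lambda,H)$ with $H(x)=\tfrac12\!\left(F(x/\theta_1)+F(x/\theta_2)\right)$. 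As in the proof of Theorem \ref{thm:CP}, it suffices to show $K\le_{\rm st}H$. For $x>0$ this reduces to
$$\overline{F}(x/c)\le \overline{F}(x/\theta_1)+\overline{F}(x/\theta_2),$$
and the substitution $u_i=\theta_i/x$ turns this into the defining inequality of $\H^*$. For $x\le 0$ the inequality is straightforward, taking into account the atom at $0$ in $K$ and the assumption $\essinf F=0$.

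For the ``$\Longrightarrow$'' direction, I argue by contradiction, transcribing the Poisson-expansion argument from the proof of Theorem \ref{thm:CP}. Suppose $F\notin\H^*$ and pick $\theta_1,\theta_2,x\in\R_{++}$ together with $\varepsilon>0$ satisfying
$$\overline{F}(x/c)-\overline{F}(x/\theta_1)-\overline{F}(x/\theta_2)=\varepsilon.$$
Conditioning on the numbers of Poisson jumps in $X_1$ and $X_2$, the zero-jump terms cancel; the one-jump contribution to $\p(cX_1>x)-\p(\theta_1X_1+\theta_2X_2>x)$ equals $\lambda e^{-\lambda}\varepsilon$ up to a constant factor, whereas the contribution from two or more jumps is bounded by $O(\lambda^2)$. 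For $\lambda$ small enough, this forces the difference to be strictly positive, contradicting \eqref{eq:SD*}.

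The main obstacle is setting up the two-variable comparison on a common Poisson rate $2\lambda$; in particular, the atom at $0$ in $K$ that arises when rewriting $cX_1\sim\mathrm{CP}(\lambda,F(\cdot/c))$ as $\mathrm{CP}(2\lambda,K)$ must be handled carefully so that the stochastic dominance $K\le_{\rm st}H$ both reduces exactly to the $\H^*$ inequality on $\R_{++}$ and remains valid on $(-\infty,0]$. Once that identification is in place, both directions reduce to ingredients already developed in the proof of Theorem \ref{thm:CP}.
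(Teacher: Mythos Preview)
Your proposal is correct and follows exactly the route the paper indicates: it says the proof is ``a slight modification to the proof of Theorem~\ref{thm:CP}'', and you have carried out precisely those modifications---replacing the $\H$/Schur-concavity comparison $H_{\overline\theta}$ vs.\ $H_{\overline\eta}$ by the $\H^*$ comparison $H$ vs.\ $K=\tfrac12(F(\cdot/c)+\id_{\{\cdot\ge0\}})$ (which is already the distribution $H^*_{\overline\eta}$ appearing at the end of the ``$\Longleftarrow$'' part of Theorem~\ref{thm:CP}'s proof), and using induction rather than $T$-transforms to reduce to $n=2$ (appropriate here since intermediate $T$-transform steps are not \eqref{eq:SD*} comparisons). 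The small-$\lambda$ contradiction for ``$\Longrightarrow$'' is likewise the same device used in Theorem~\ref{thm:CP}.
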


\section{Stable distributions}\label{sec:4}

In this section, we present a discussion of \eqref{eq:SD} for stable random variables to complement the findings of \cite{ibragimov2005new}, who focused on the peakedness property of linear combinations of stable random variables. Denote by  $S_\alpha(\sigma,\beta,\mu)$  the stable distributions with stability parameter $\alpha \in (0,2]$, skewness parameter $\beta\in[-1,1]$, scale parameter $\sigma\in \R_{++}$, and shift parameter $\mu\in\R$. We say $S_\alpha(1,\beta,0)$ is a standard stable distribution.
The characteristic function of $ X\sim S_\alpha(\sigma,\beta,\mu)$ is given by,  for $x \in \R$,
\begin{equation*}
\E\big [\exp\{{\rm i} x X\}\big ] =\begin{cases}
\exp\left \{ {\rm i} \mu x-\sigma^\alpha |x|^\alpha \(1-{\rm i} \beta\sign(x)
\displaystyle  \tan \frac{\pi\alpha}{2}\)\right\}, & \mbox{if $\alpha\ne 1$},\\[8pt]
\exp\left\{{\rm i}\mu x-\sigma|x|\(1+{\rm i}\displaystyle\frac{2\beta}{\pi}\sign(x)\log|x|\)\right\},
&   \mbox{if $\alpha= 1$},
\end{cases}
\end{equation*}
where $\sign(\cdot) $ is the sign function. Stable random variables do not have finite mean if $\alpha\le 1$. Stable distributions with $\alpha\in(0,1)$ and $\beta=\pm1$ are one-sided, and the support  is $[\mu,\infty)$ for $\beta=1$ (resp.~$(-\infty,\mu]$ for $\beta=-1$). All other cases of stable distributions have both left and right tails. Stable distributions include normal distributions ($\alpha=2$), Cauchy distributions ($\alpha=1$ and $\beta=0$), Landau distributions ($\alpha=1$ and $\beta=1$), and L\'evy distributions ($\alpha=1/2$ and $\beta=1$).  
See \cite{No20} for more details on stable distributions.

For non-degenerate iid random variables $X_1,\dots,X_n$, stable distributions are the limiting distributions of $a_n(X_1+\dots+X_n)+b_n$ for some $a_n\in \R_{++}$ and $b_n\in\R$. If $\E(|X_1|^2)<\infty$, the limiting distribution is the normal distribution, otherwise the limiting distribution is the stable distribution with $\alpha<2$. By Proposition \ref{prop:trivial}, we have the following result.
\begin{proposition}
If $X_1,\dots,X_n$ are iid and satisfy \eqref{eq:SD} and  $a_n(X_1+\dots+X_n)+b_n\xrightarrow{d}Z$ for some stable random variable $Z$, where $a_n\in \R_{++}$ and $b_n\in\R$, then $Z$ satisfies \eqref{eq:SD}.
\end{proposition}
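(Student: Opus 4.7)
The plan is to combine parts (ii), (iii), and (iv) of Proposition \ref{prop:trivial} to transport property \eqref{eq:SD} from $X_1$ to the stable limit $Z$ via the normalized partial sums. Let $S_n := X_1 + \cdots + X_n$. The first step is to show by induction on $n$ that each $S_n$ satisfies \eqref{eq:SD}. The base case $n=1$ is exactly the hypothesis on $X_1$. For the inductive step, I would observe that $S_n$ and $X_{n+1}$ are independent, and both satisfy \eqref{eq:SD}---the former by the inductive hypothesis, the latter because $X_{n+1}$ is distributed as $X_1$---so part (ii) delivers that $S_{n+1} = S_n + X_{n+1}$ satisfies \eqref{eq:SD}.

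The second step is to pass to the affinely normalized sums. Since $a_n \in \R_{++}$ and $b_n \in \R$, choosing $a = b_n/a_n$ and $b = 1/a_n$ in part (iii) shows that $(S_n + a)/b = a_n S_n + b_n$ satisfies \eqref{eq:SD} for every $n$. The third and final step is to invoke part (iv): since $a_n S_n + b_n \xrightarrow{d} Z$ by hypothesis and each $a_n S_n + b_n$ satisfies \eqref{eq:SD}, the limit $Z$ also satisfies \eqref{eq:SD}.

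There is no serious obstacle here; the proof is essentially an assembly of closure properties already established in Proposition \ref{prop:trivial}. It is worth remarking that stability of $Z$ plays no role in the argument itself---\emph{any} weak limit of affinely normalized iid partial sums of random variables satisfying \eqref{eq:SD} must itself satisfy \eqref{eq:SD}. The stability assumption is relevant only because stable laws are exactly the distributions that can arise as such limits, which is what ties the proposition to the discussion of Section \ref{sec:4}.
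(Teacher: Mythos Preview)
Your proof is correct and follows exactly the approach implicit in the paper's one-line justification ``By Proposition \ref{prop:trivial}'': assemble parts (ii), (iii), and (iv) to pass from $X_1$ to $S_n$, then to $a_n S_n + b_n$, then to the weak limit $Z$. Your closing observation that stability of $Z$ plays no role in the argument is also accurate.
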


Indeed, one can use the nice properties of stable distributions to give the following results of \eqref{eq:SD} for stable distributions. We assume that the stable distributions in the following result are standard for simplicity, since usual stochastic order is preserved under location-scale transforms.
Theorem \ref{thm:stable} (ii) below has appeared in Theorem 1.2.4 of \cite{ibragimov2005new} in a different form; we provide a proof here for completeness.

\begin{theorem}\label{thm:stable}
The stable distribution $S_\alpha(1,\beta,0)$ satisfies \eqref{eq:SD} if and only if one of the following conditions holds.
\begin{enumerate}[{\rm (i)}]
\item $\alpha=1$ and $\beta\in \R_+$;
\item  $\alpha< 1$ and $\beta=1$.
\end{enumerate}
\end{theorem}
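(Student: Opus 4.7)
The plan is to reduce \eqref{eq:SD} to an elementary statement about a single stable random variable by using the explicit form of the characteristic function. By Proposition \ref{prop:trivial}(i), a non-constant distribution satisfying \eqref{eq:SD} must have infinite mean, so only $\alpha\le 1$ can possibly work. The cases $\alpha\in(0,1)$ and $\alpha=1$ are structurally different because of the logarithmic term appearing in the $\alpha=1$ characteristic function, and I would treat them separately.

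For $\alpha\in(0,1)$, taking products of characteristic functions of the iid components gives
$$
\varphi_{\sum_i\theta_iX_i}(t)=\exp\!\left(-\Big(\sum_i\theta_i^\alpha\Big)|t|^\alpha\big(1-{\rm i}\beta\sign(t)\tan(\pi\alpha/2)\big)\right),
$$
so $\sum_i\theta_iX_i\simeq_{\rm st} c(\overline\theta)X_1$ with $c(\overline\theta)=(\sum_i\theta_i^\alpha)^{1/\alpha}$. Concavity of $x\mapsto x^\alpha$ makes $\overline\theta\mapsto\sum_i\theta_i^\alpha$ Schur-concave by 3.C.1 of \cite{MOA11}, hence $\overline\theta\preceq\overline\eta$ forces $c(\overline\theta)\ge c(\overline\eta)$. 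Thus \eqref{eq:SD} reduces to the elementary monotonicity $c_1X_1\le_{\rm st} c_2X_1$ for $0\le c_1\le c_2$; writing $\p(c_iX_1\le x)=\p(X_1\le x/c_i)$ shows this holds for every $x\in\R$ if and only if $\p(X_1<0)=0$, which for $\alpha<1$ means $\beta=1$. For $\beta<1$ I would use $\overline\eta=(1,0,\dots,0)$ and $\overline\theta=(1/2,1/2,0,\dots,0)$: here $c(\overline\theta)=2^{(1-\alpha)/\alpha}>1$ and $X_1$ puts positive mass on $(-\infty,0)$, so $\p(c(\overline\theta)X_1>x)<\p(X_1>x)$ for some $x<0$, contradicting \eqref{eq:SD}.

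For $\alpha=1$, the same product calculation, combined with the identity $\sum_i\theta_i\log(\theta_i|t|)=S\log|t|+\sum_i\theta_i\log\theta_i$ where $S=\sum_i\theta_i$, yields
$$
\varphi_{\sum_i\theta_iX_i}(t)=\exp\!\left(-S|t|\big(1+{\rm i}(2\beta/\pi)\sign(t)\log|t|\big)-{\rm i}(2\beta/\pi)t\sum_i\theta_i\log\theta_i\right),
$$
which identifies $\sum_i\theta_iX_i\sim S_1(S,\beta,\mu(\overline\theta))$ with $\mu(\overline\theta)=-(2\beta/\pi)\sum_i\theta_i\log\theta_i$. Only the location depends on $\overline\theta$, so \eqref{eq:SD} reduces to $\mu(\overline\eta)\le\mu(\overline\theta)$ for every $\overline\theta\preceq\overline\eta$. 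Convexity of $x\mapsto x\log x$ makes $\sum_ix_i\log x_i$ Schur-convex, so $\sum_i\theta_i\log\theta_i\le\sum_i\eta_i\log\eta_i$, and multiplying by $-2\beta/\pi$ preserves the inequality in the direction required by \eqref{eq:SD} exactly when $\beta\ge 0$. For $\beta<0$ the pair $\overline\eta=(1,0)$, $\overline\theta=(1/2,1/2)$ gives $\mu(\overline\theta)=(2\beta/\pi)\log 2<0=\mu(\overline\eta)$, so \eqref{eq:SD} fails.

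The main technical step is the $\alpha=1$ characteristic-function manipulation: the $\log|t|$ factor forces an extra location shift linear in $\sum_i\theta_i\log\theta_i$ even though every $X_i$ is centered at $0$, and isolating this shift with correct sign conventions is where the care is needed. Once the shift is identified, everything else follows from the Schur-convexity of $\sum_ix_i\log x_i$ and Schur-concavity of $\sum_ix_i^\alpha$ combined with the monotonicity of $cX_1$ in $c\in\R_{++}$ when $X_1\ge 0$.
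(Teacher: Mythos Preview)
Your proposal is correct and follows essentially the same route as the paper's proof: both exploit the closed-form distributional identities $\sum_i\theta_iX_i\simeq_{\rm st}(\sum_i\theta_i^\alpha)^{1/\alpha}X_1$ for $\alpha\ne 1$ and $\sum_i\theta_iX_i\simeq_{\rm st}X_1-(2\beta/\pi)\sum_i\theta_i\log\theta_i$ for $\alpha=1$ (the paper cites Nolan's (1.7) rather than recomputing the characteristic function), and then reduce \eqref{eq:SD} to the Schur-concavity of $\sum_i\theta_i^\alpha$ and of $-\sum_i\theta_i\log\theta_i$. Your converse via the explicit pair $\overline\eta=(1,0)$, $\overline\theta=(1/2,1/2)$ is a concrete instance of the same contradiction the paper obtains more abstractly.
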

\begin{proof}
Let $X_1,\dots,X_n\sim S_\alpha(1,\beta,0)$ be independent and $(w_1,\dots,w_n)\in\R_+^n$ such that $\sum_{i=1}^nw_i=1$. We first show ``$\Longleftarrow$'' direction.  If $\alpha=1$, 
it is seen from (1.7) of \cite{No20} that
\begin{equation}\label{eq:stable1}
\sum^n_{i=1} w_i X_i \simeq_{\rm st} X_1-\frac{2\beta}{\pi}\sum_{i=1}^nw_i\log w_i.
\end{equation}
As $g(t) =-t\log(t)$ is concave in $t\in (0,1)$, $\sum_{i=1}^ng(t_i)$ is Schur-concave in $(t_1,\dots, t_n)\in \R_+^n$ \citep[see][Proposition 3.C.1]{MOA11}. Hence, we have (i). 

Similarly, to show (ii), we have from (1.7) of \cite{No20} that if $\alpha\neq 1$,
\begin{equation}\label{eq:stable2}
\sum^n_{i=1} w_i X_i \simeq_{\rm st} \(\sum_{i=1}^nw_i^\alpha\)^{1/\alpha}X_1.
\end{equation}
% Then
% \begin{align*}
% \P\(\sum^n_{i=1} b_i X_i >u\)= \P\(X_1 > \frac{u}{(\sum^n_{i=1} b_i^\alpha)^{1/\alpha}}\),\quad u>0.
% \end{align*}
If $\alpha< 1$ and $\beta=1$,  $X_1$ is positive almost surely and $\sum^n_{i=1} c_i^\alpha$ is Schur-concave in $(c_1, \dots, c_n) \in \R^n_+$. Thus, we have the desired result.

For the ``$\Longrightarrow$'' direction, we first note that if $S_\alpha(1,\beta,0)$ satisfies \eqref{eq:SD}, then $\alpha\le 1$; this is due to Proposition \ref{prop:trivial} (i).  By \eqref{eq:stable1}, if $\alpha=1$, we must have $\beta\in \R_+$. If $\alpha<1$, assume that $\beta<1$. Then for $u\le0$,  by \eqref{eq:stable2}, we have 
 \begin{align*}
 \P\(\sum^n_{i=1} w_i X_i \le u\)= \P\(X_1 \le  \frac{u}{(\sum^n_{i=1} w_i^\alpha)^{1/\alpha}}\).
 \end{align*}
 Thus $\P\(\sum^n_{i=1} w_i X_i \le u\)$ is Schur-concave in $(w_1,\dots,w_2)$ and we have a contradiction. Therefore, if $\alpha<1$,  $\beta=1$.
\end{proof}
Theorem \ref{thm:stable} (i) suggests that for a random variable to satisfy \eqref{eq:SD}, the random variable is not necessarily bounded from below, which is a key assumption of Theorem \ref{thm:main}; see also Remark \ref{rem:unbounded}.

As the absolute Cauchy distribution satisfies \eqref{eq:SD}, a natural question is whether absolute stable distributions with parameter $\alpha\le 1$ also satisfy \eqref{eq:SD}. We conjecture that the answer is yes but do not have a proof. We use the stochastic representation of stable random variables in \cite{WR96} to simulate 10000 random samples of two independent stable random variables $X_1,X_2\sim S_\alpha(1,\beta,0)$; see Appendix \ref{sec:B} for the stochastic representation. Let  $Y_1= a_1|X_1|+a_2|X_2|$ and $Y_2= b_1|X_1|+b_2|X_2|$. We consider three cases of the weight vector $(a_1, a_2, b_1, b_2) = (1, 4, 2, 3)$, $(2, 6, 3, 5)$ or $(3, 8, 5, 6)$.
Figure \ref{Stable-1} plots the empirical distribution functions of $Y_1$ and $Y_2$ with $\alpha=0.10$ and $\beta=0$ or $-0.40$, respectively.
Figure \ref{Stable-2} plots the empirical distribution functions of $Y_1$ and $Y_2$ with $\alpha=0.90$ and $\beta=0$ or $0.30$, respectively. From Figures \ref{Stable-1} and \ref{Stable-2}, we find that the empirical distributions $\widehat{F}_1$ and $\widehat{F}_2$ of $Y_1$ and $Y_2$ seem to satisfy $\widehat{F}_1(x) \ge \widehat{F}_2(x)$ for all $x$. 
\begin{figure}[h]
\centering
\includegraphics[width=7cm]{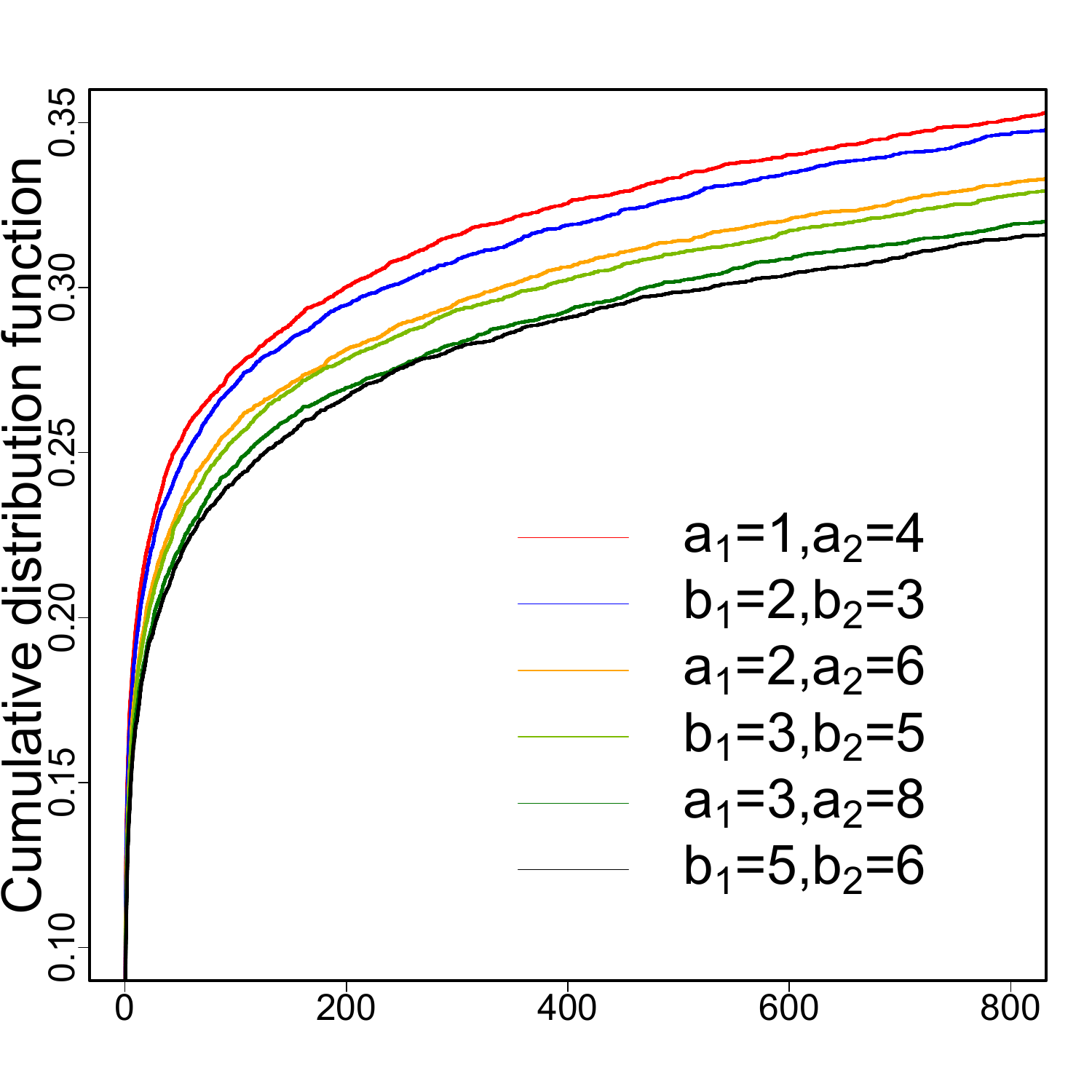}
\centering
\includegraphics[width=7cm]{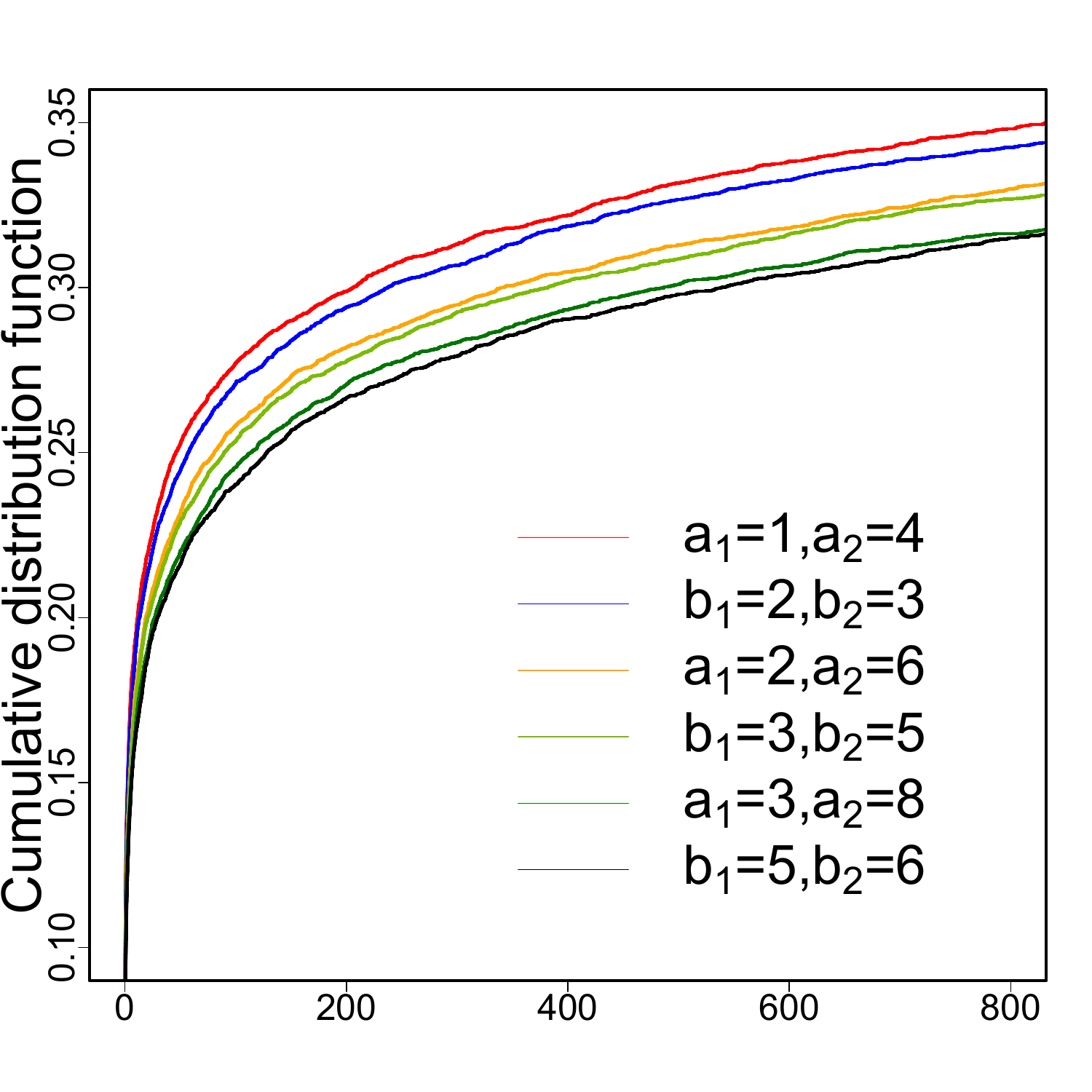}
\caption{Empirical distribution functions of $Y_1$ and $Y_2$ with $\alpha=0.1,\ \beta=0$ (left) and $\alpha=0.1,\ \beta=-0.4$ (right).}    \label{Stable-1}
\end{figure}
\begin{figure}[h]
\centering
\includegraphics[width=7cm]{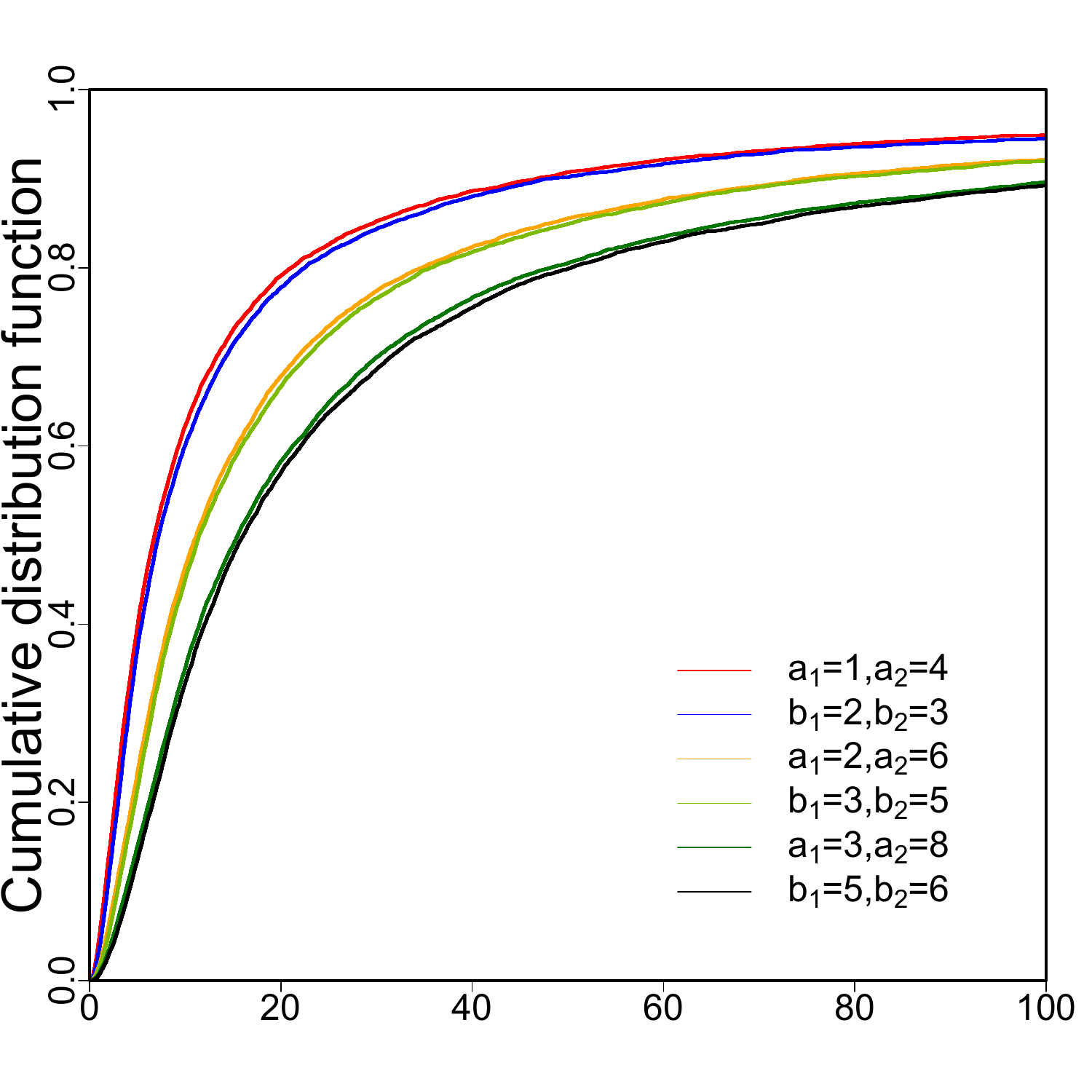}
\centering
\includegraphics[width=7cm]{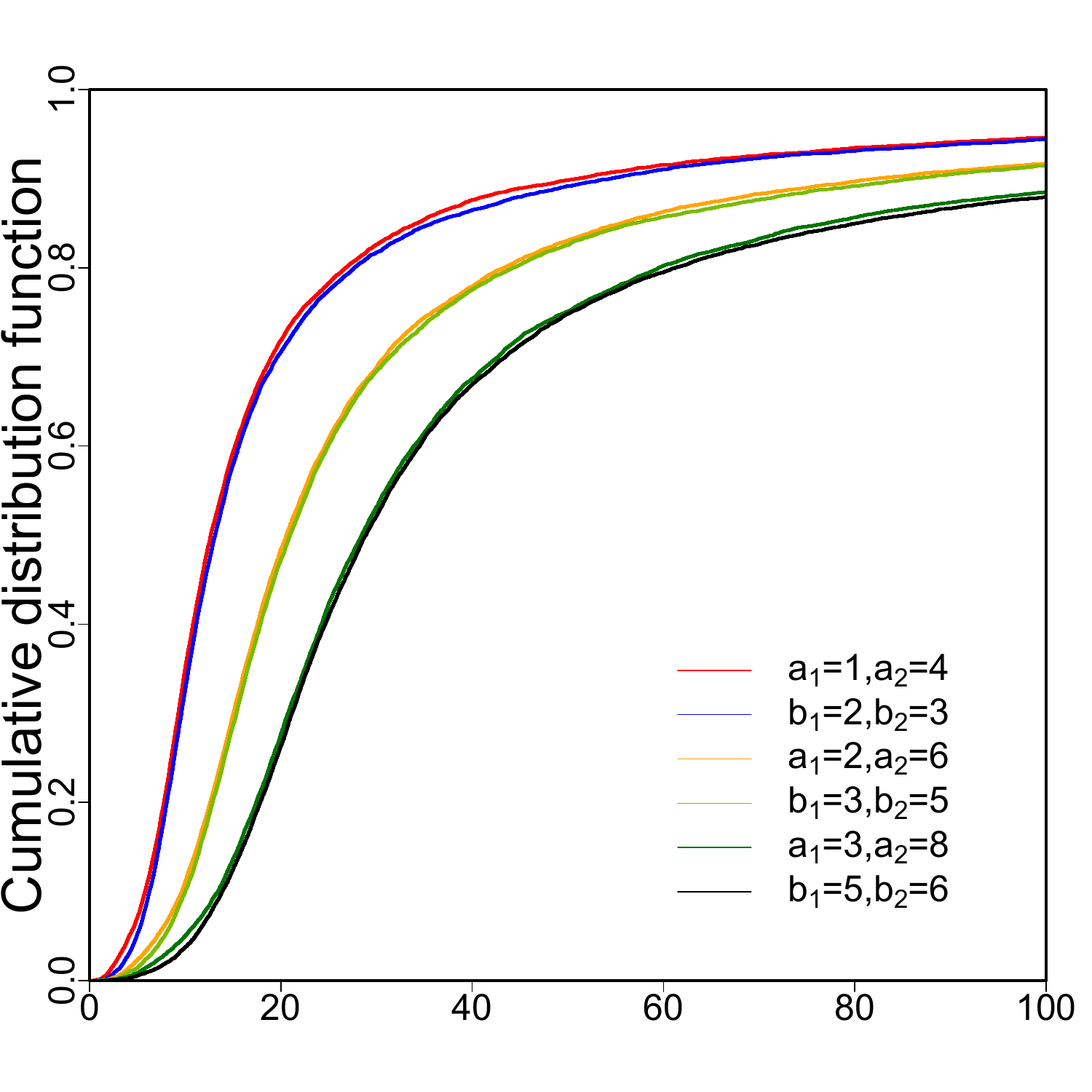}
\caption{Empirical distribution functions of $Y_1$ and $Y_2$ with $\alpha=0.9,\ \beta=0$ (left) and $\alpha=0.9,\ \beta=0.3$ (right).}
\label{Stable-2}
\end{figure}

\section{Concluding remarks}\label{sec:5}

In this paper, we study sufficient and necessary conditions for the stochastic dominance relation \eqref{eq:SD} across different classes of distributions. While previous studies have established \eqref{eq:SD} for a limited set of distributions (stable distributions with tail parameter strictly less than 1 and Pareto distributions with infinite mean), we broaden the scope by proving that \eqref{eq:SD} holds for a much richer class of distributions, denoted by $\H$.
 Moreover, if each random variable in \eqref{eq:SD} is a compound Poisson sum, then \eqref{eq:SD} holds if and only if the summand of the compound Poisson sum belongs to $\H$. We conclude the paper with some open questions.

Theorem \ref{thm:main} provides a sufficient condition for \eqref{eq:SD} with the assumption that random variables are bounded from below. How far is this condition from being necessary? Moreover, as \eqref{eq:SD} can also hold for random variables taking values on the whole real line, such as the Cauchy distribution,  what is the sufficient condition of \eqref{eq:SD} without the assumption that random variables are bounded from below?

Examples \ref{ex:absCauchy} and \ref{ex:IG} show that the absolute Cauchy and the L{\'e}vy distributions belong to $\H$. A natural question is whether absolute stable random variables with tail parameters less than or equal to 1 and positive stable random variables also belong to $\H$.
 The absence of closed-form expressions for distribution functions in most stable distributions creates a methodological challenge in directly verifying conditions for $\H$, necessitating alternative analytical techniques.

Another relevant question is whether all absolute stable random variables with tail parameters less than or equal to $1$ satisfy property \eqref{eq:SD}. The answer seems to be yes based on some simulation results.

It has been shown by \cite{CHWZ25} that \eqref{eq:SD} holds for infinite-mean Pareto random variable that are positively dependent via some Clayton copula. The last question is whether such a result can be generalized for more general classes of distributions and for more general dependence structures of random variables.

\newpage

{

\appendix

\setcounter{table}{0}
\setcounter{figure}{0}
\setcounter{equation}{0}
\renewcommand{\thetable}{A.\arabic{table}}
\renewcommand{\thefigure}{A.\arabic{figure}}
\renewcommand{\theequation}{A.\arabic{equation}}

\setcounter{theorem}{0}
\setcounter{proposition}{0}
\renewcommand{\thetheorem}{A.\arabic{theorem}}
\renewcommand{\theproposition}{A.\arabic{proposition}}
\setcounter{lemma}{0}
\renewcommand{\thelemma}{A.\arabic{lemma}}

\setcounter{example}{0}
\renewcommand{\theexample}{A.\arabic{example}}

\setcounter{corollary}{0}
\renewcommand{\thecorollary}{A.\arabic{corollary}}

\setcounter{remark}{0}
\renewcommand{\theremark}{A.\arabic{remark}}
\setcounter{definition}{0}
\renewcommand{\thedefinition}{A.\arabic{definition}}

\begin{center}
\LARGE	Appendices
\end{center}
\section{Distributions in the class $\H$}\label{sec:SHex}

In this section, we show that many infinite-mean distributions are in $\H$. For a distribution function $F$, we use $f$ to denote its density and let $h(x)=x^2f(x)$, $x\in \R_+$.

\begin{example}[Pareto distribution]\label{ex:Pareto}
For $\alpha\in \R_{++}$, the Pareto distribution function is
$$F(x)=1-\frac{1}{(x+1)^\alpha},~~ x\in \R_{++}.$$
If $\alpha\le 1$, then $F$ has infinite mean.
Let $h(x)=\overline F(1/x)=(x/(x+1))^\alpha$, $x\in \R_+$.  Taking the second-order derivative of $h$, we have 
$$h''(x)=\frac{\alpha (\alpha-2x-1)}{x^{2-\alpha}(x+1)^{2+\alpha}}.$$ 
If $\alpha\le 1$, then $h''$ is concave.  Thus, by Proposition \ref{prop:Hcont}, we have $F\in \H$ if $\alpha\le 1$.
\end{example}

\begin{example}[Log-Pareto distribution]
The log-Pareto distribution (see p.~39 in \cite{A15}) has distribution function 
$$F(x)=1-\frac{1}{(\log(x+1)+1)^\alpha},~~x\in \R_{++}.$$ 
The derivative of $h$ is 
$$h'(x)=-\frac{\alpha x}{(x+1)^2 (\log(x+1)+1)^{\alpha+2}} \big[\alpha x-(x+2)\log(x+1)-2\big].$$
Let 
$$g(x)=\alpha x-(x+2)\log(x+1)-2,~~x\in \R_+.$$
By Proposition \ref{prop:Hcont}, we need $h'(x)\ge 0$, or equivalently, $g(x)\le 0$, such that  $F\in\H$. As $g(0)=0$, it is sufficient to chose $\alpha$ such that $g$ is decreasing. We have 
$$g'(x)=\alpha-\frac{x+2}{x+1}-\log(x+1).$$
Thus if $\alpha\le 2$, then $F\in\H$.
\end{example}

\begin{example}[Inverse Burr distribution]
The inverse Burr distribution is defined as 
\begin{equation*}
F(x)=\(\frac{x^\tau}{x^\tau+1}\)^\alpha,~~ x\in \R_{++},
\end{equation*}
where $\alpha,\tau\in \R_{++}$.  For $x\in \R_+$,
$$h'(x)=\alpha\tau x^{\alpha\tau}(x^\tau+1)^{-\alpha-2}(\alpha\tau-(\tau-1)x^\tau+1).$$ 
Hence, if $\tau\le 1$, then $h'(x) \geq 0$. By Proposition \ref{prop:Hcont}, we have $F\in \H$.

\end{example}

\begin{example}[Stoppa distribution]
For $\alpha, \beta\in \R_{++}$, a (location-shifted) Stoppa distribution can be defined as
\begin{equation*}
F(x)=\(1-\frac{1}{(x+1)^\alpha}\)^\beta, ~~x\in \R_{++}.
\end{equation*}
Since a Stoppa distribution is a power transform of a Pareto distribution,
by Proposition \ref{prop:Hproperty} (i), if $\alpha\le 1$, then $F\in\H$.  Power transforms have also been used to generalize Burr distributions (see p.~211 of \cite{KK03}).
\end{example}

\begin{example}[Log-Cauchy distribution]
The log-Cauchy distribution function is 
$$F(x)=\frac{1}{\pi}\arctan(\log(x))+\frac{1}{2},~~x\in \R_{++}.$$
We have
$$f(x)=\frac{1}{x^2\pi}\frac{x}{(\log (x))^2+1}.$$
As $x/(\log(x)^2+1)$ is increasing in $x$, then $F\in\H$ by Remark \ref{rem:RV}.
\end{example}

\section{Simulation of stable distributions}\label{sec:B}
We recall below the simulation procedure of standard stable distributions $S_\alpha(1,\beta,0)$ in  \cite{WR96}. 
\begin{itemize}
\item[(1)] Let two  random variables  $V \sim U(-\pi/2, \pi/2)$ and $W \sim {\rm Exp}(1)$ be independent.
\item[(2)] For $\alpha \neq 1$, let 
$$ 
X = M_{\alpha, \beta} \frac{\sin(\alpha(V-N_{\alpha, \beta}))}{(\cos(V))^{1/\alpha}}
\left(\frac{1}{W} \cos\Big (V-\alpha(V-N_{\alpha, \beta})\Big ) \right)^{(1-\alpha)/\alpha},
$$
where
$$
M_{\alpha, \beta} = \left[1 + \beta^2 \tan^2\left(\frac{\pi\alpha}{2}\right)\right]^{1/(2\alpha)}, \qquad  N_{\alpha, \beta} = - \frac{1}{\alpha} \arctan\left(\beta \tan \frac{\pi\alpha}{2}\right).
$$

\item[(3)] For $\alpha = 1$, let
$$  X = \frac{\pi}{2} \left[\left(\frac{\pi}{2}+\beta V\right) \tan(V)
- \beta \log \left(\frac{W\cos(V)}{\pi/2+\beta V}\right)\right].
$$
\end{itemize}
As $X \sim S_\alpha(1, \beta, 0)$, one can generate samples of $X$ by first generating samples of $V$ and $W$ and then plug in the samples to the above  stochastic representations of $X$.

}

\section*{Funding}
Y.~Chen is supported by the 2025 Early Career Researcher Grant from the University of Melbourne.
T.~Hu would like to acknowledge financial support from National Natural Science Foundation of China (No.~72332007, 12371476).
Z.~Zou is supported by National Natural Science Foundation of China (No. 12401625), the China Postdoctoral Science Foundation (No. 2024M753074), the Postdoctoral Fellowship Program of CPSF (GZC20232556).

\section*{Disclosure statement}

No potential conflict of interest was reported by the authors.

\end{document}